\newcommand{\la}{\lambda}
\newcommand{\om}{\omega}
\newcommand{\be}{\begin{equation}}
	\newcommand{\ee}{\end{equation}}
\newcommand{\bea}{\begin{eqnarray}}
	\newcommand{\eea}{\end{eqnarray}}
\newcommand{\bee}{\begin{eqnarray*}}
	\newcommand{\eee}{\end{eqnarray*}}
\newcommand{\ba}{\begin{aligned}}
	\newcommand{\ea}{\end{aligned}}
\newcommand{\bp}{\begin{proof}}
	\newcommand{\ep}{\end{proof}}  
\newcommand{\br}{\begin{remark}}
	\newcommand{\er}{\end{remark}}  
\newcommand{\lb}{\label}
\newtheorem{thm}{Theorem}[section]
\newtheorem{lem}[thm]{Lemma}
\theoremstyle{definition}
\newtheorem{defn}[thm]{Definition}
\theoremstyle{remark}
\newcommand{\Field}{\mathbb{F}}
\begin{document}
	\title{Existence of primitive normal pairs over finite fields with prescribed subtrace}
	\author[K. Chatterjee]{Kaustav Chatterjee$^*$}
	\address{Department of Mathematics,  Indian Institute of Technology Patna, BIHAR, INDIA.}
	\email{kaustav0004@gmail.com}
	
	\author[G. Kapetanakis]{Giorgos Kapetanakis}
	\address{Department of Mathematics, University of Thessaly, 3rd km Old National Road 
		Lamia-Athens, 35100, Lamia, Greece}
	\email{kapetanakis@uth.gr}

	\author[H. Sharma]{Hariom Sharma}
	\address{ Department of Mathematics, S. S. Govt. P.G. College Tigaon, HARYANA, INDIA.}
	\email{hariomsharma638@gmail.com}
	
	\author[S.K. Tiwari]{Shailesh Kumar Tiwari}
	\address{ Department of Mathematics,  Indian Institute of Technology Patna, BIHAR, INDIA.}
	\email{sktiwari@iitp.ac.in, shaileshiitd84@gmail.com}

	\thanks{*email:kaustav0004@gmail.com}
	\subjclass[2020]{11T23; 12E20}
	\keywords{Finite field; Characters; Primitive; Normal; Trace}
	
	\begin{abstract}
		Given positive integers $q,n,m$ and $a\in\Field_{q}$, where $q$ is an odd prime power and $n\geq 5$, we investigate the existence of a primitive normal pair $(\epsilon,f(\epsilon))$ in $\Field_{q^{n}}$ over $\Field_{q}$ such that $\mathrm{STr}_{q^n/q}(\epsilon)=a$, where $f(x)=\frac{f_{1}(x)}{f_{2}(x)}\in\mathbb{F}_{q^n}(x)$ is a rational function together with deg$(f_{1})+$deg$(f_{2})=m$ and $\mathrm{STr}_{q^n/q}(\epsilon) = \sum_{0\leq i<j\leq n-1}^{}\epsilon^{q^i+q^j}$. Finally, we conclude that for $m=2$, $n\geq 6$ and $q=7^k$; $k\in\mathbb{N}$,  such a pair will exist certainly for all $(q,n)$ except at most $11$ choices.
	\end{abstract}
	\maketitle
	\section{Introduction}
	Assume that $\Field_{q}$ represents a finite field containing $q$ elements, where $q$ is of the form $p^k$, where $k$ is a positive integer and $p$ is a prime. The set $\mathbb{F}_q^*=\Field_{q}\smallsetminus \{0\}$ forms a cyclic group with respect to multiplication and any generator of the multiplicative group $\mathbb{F}_q^*$ is defined to be a primitive element of $\Field_{q}$. We always get $\phi(q-1)$ number of primitive elements in the finite field $\Field_{q}$, where $\phi$ represents the Euler's totient function. For some primitive $\epsilon\in\Field_{q^{n}}^*$, if there exists a non constant polynomial of least degree over $\Field_{q}$ such that $\epsilon$ is one of it's roots, then the corresponding polynomial is defined as primitive polynomial of $\epsilon$. Let $\Field_{q^n}$ denotes a finite extension of the finite field $\Field_{q}$ of degree of $n$, where $n$ represents a natural number. The elements $\epsilon, \epsilon^{q}, \ldots, \epsilon^{q^{n-1}}$ are said to be the conjugates of $\epsilon$ with respect to $\Field_{q}$, for any $\epsilon$ $\in\Field_{q^n}$. In particular, if for some $\epsilon\in\Field_{q^{n}}$, the set contained with the conjugates of $\epsilon$ creates a basis of $\Field_{q^n}$ over $\Field_q$, then $\epsilon$ is said to be normal, whereas the basis itself is referred as a normal basis. A pair $(\epsilon,\delta) \in\Field_{q^n}^*\times \Field_{q^n}^*$ is defined to be a primitive normal pair provided both $\epsilon$ and $\delta$ are primitive along with normal over the finite field $\Field_{q}$. In order to get additional details regarding the  primitive normal elements in finite fields, we prefer \cite{RH} to the reader. In this article, let us introduce the following definition.
	
	For any $\epsilon\in\Field_{q^{n}}$, the \emph{subtrace} of $\epsilon$ over $\Field_{q}$ is denoted by $\mathrm{STr}_{q^n/q}(\epsilon)$ and is defined by 
	\begin{equation}\nonumber
		\mathrm{STr}_{q^n/q}(\epsilon)=\sum_{0\leq i<j\leq n-1}^{}\epsilon^{q^i+q^j}.
	\end{equation}
	In other words, considering the products $\epsilon^{q^i}\cdot\epsilon^{q^j} (0\leq i<j \leq n-1)$ of any two distinct conjugates and then summing up these products, we get the subtrace of $\epsilon$ over $\Field_{q}$. We now represent the subtrace in terms of minimum polynomial as follows.  Assume that $\Psi(x)=x^n+a_{n-1}x^{n-1}+a_{n-2}x^{n-2}+\ldots+a_{0}\in\Field_{q}[x]$ be the minimum polynomial of $\epsilon$.  Again, the roots of $\Psi$ are $\epsilon, \epsilon^{q}, \ldots, \epsilon^{q^{n-1}}$. Thus 
	\begin{equation}\nonumber
		\begin{aligned}
			\Psi(x)&=x^n+a_{n-1}x^{n-1}+a_{n-2}x^{n-2}+\ldots+a_{0}\\
			&=(x-\epsilon)(x-\epsilon^q)(x-\epsilon^{q^2})\ldots(x-\epsilon^{q^{n-1}}),
		\end{aligned}
	\end{equation}
	and equating the coefficients we have $\mathrm{STr}_{q^n/q}(\epsilon)=a_{n-2}$, that is, $\mathrm{STr}_{q^n/q}(\epsilon)\in\Field_{q}$.

	Previous to the paper, sufficient conditions were established for the existence of a primitive normal pair $(\epsilon,f(\epsilon))$ together with prescribed trace or norm, where $f(x)\in\Field_{q^{n}}(x)$ rational function having some minor restrictions . In this article, we try to find those pairs $(q,n)$ such that the field $\Field_{q^{n}}$ contains at least a primitive normal pair $(\epsilon,f(\epsilon))$ over $\Field_{q}$ such that $\mathrm{STr}_{q^n/q}(\epsilon)=a$ for prescribed $a\in\Field_{q}$. Further, the \emph{trace} of an element $\epsilon \in \Field_{q^n}$ over $\Field_{q}$, denoted by $\mathrm{Tr}_{{q^n}/q}(\epsilon)$ and is defined by $\mathrm{Tr}_{{q^n}/q}(\epsilon)=\epsilon+\epsilon^{q}+\ldots+\epsilon^{q^{n-1}}$. To proceed further, we shall utilize the following result.
	\begin{lem}\lb{L1.2}
		Assume that $q$, $n$ be positive integers where $q$ is an odd prime power and $a\in\Field_{q}$. Then
		$\mathrm{STr}_{q^n/q}(\epsilon)=\{{\mathrm{Tr}_{q^n/q}(\epsilon)}^2-\mathrm{Tr}_{q^n/q}(\epsilon^2)\}\cdot2^{-1}$ for any $\epsilon\in\Field_{q^{n}}$. Moreover, if  ${\mathrm{Tr}_{q^n/q}(\epsilon)}^2=a$ and $\mathrm{Tr}_{q^n/q}(\epsilon^2)=-a$, then $\mathrm{STr}_{q^n/q}(\epsilon)=a$.
	\end{lem}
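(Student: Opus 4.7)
The plan is to expand the square of the trace and match it term-by-term against the defining sum of the subtrace. Writing $\mathrm{Tr}_{q^n/q}(\epsilon)=\sum_{i=0}^{n-1}\epsilon^{q^{i}}$, I would square this finite sum and split the resulting double sum into its diagonal and off-diagonal contributions:
\begin{equation}\nonumber
\mathrm{Tr}_{q^n/q}(\epsilon)^{2}=\sum_{i=0}^{n-1}\epsilon^{2q^{i}}+2\!\!\sum_{0\le i<j\le n-1}\!\!\epsilon^{q^{i}+q^{j}}.
\end{equation}
The diagonal sum is exactly $\sum_{i=0}^{n-1}(\epsilon^{2})^{q^{i}}=\mathrm{Tr}_{q^n/q}(\epsilon^{2})$, using that the Frobenius commutes with powers. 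Thus I immediately read off
\begin{equation}\nonumber
\mathrm{Tr}_{q^n/q}(\epsilon)^{2}-\mathrm{Tr}_{q^n/q}(\epsilon^{2})=2\,\mathrm{STr}_{q^n/q}(\epsilon).
\end{equation}

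Next, since $q$ is an odd prime power, the element $2$ is a unit in $\Field_{q}$, so I can multiply both sides by $2^{-1}$ to obtain the claimed identity. The only subtle point worth noting is that the identity takes place inside $\Field_{q^{n}}$, but since both sides are known a priori to land in $\Field_{q}$ (the subtrace being a coefficient of the minimal polynomial, as observed just above the lemma, and traces always landing in $\Field_{q}$), the division by $2$ is well-defined in $\Field_{q}$.

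For the moreover part, I would just substitute. Under the assumptions $\mathrm{Tr}_{q^n/q}(\epsilon)^{2}=a$ and $\mathrm{Tr}_{q^n/q}(\epsilon^{2})=-a$, the identity specialises to $\mathrm{STr}_{q^n/q}(\epsilon)=(a-(-a))\cdot 2^{-1}=2a\cdot 2^{-1}=a$. There is no real obstacle here; the whole lemma is an algebraic identity whose content is the combinatorial observation that squaring a sum of $n$ terms produces $n$ diagonal squares plus twice the sum of unordered off-diagonal products. The only hypothesis being used in a non-trivial way is $q$ odd, which is needed precisely so that $2^{-1}$ exists.
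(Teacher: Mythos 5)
Your proposal is correct and follows essentially the same route as the paper: expand $\mathrm{Tr}_{q^n/q}(\epsilon)^2$ into diagonal plus off-diagonal terms, recognize the diagonal as $\mathrm{Tr}_{q^n/q}(\epsilon^2)$, cancel, and divide by $2$ using that $q$ is odd; the ``moreover'' part is the same direct substitution, which the paper merely phrases as the identity $\mathrm{STr}_{q^n/q}(\epsilon)-a=\{(\mathrm{Tr}_{q^n/q}(\epsilon)^2-a)-(\mathrm{Tr}_{q^n/q}(\epsilon^2)+a)\}\cdot 2^{-1}$. Nothing differs in substance; your side remark about the two sides lying in $\Field_{q}$ is harmless but unnecessary, since $2^{-1}$ already exists in $\Field_{q^n}$.
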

	\bp
	For any $\epsilon\in\Field_{q^{n}}$, 
	\begin{equation}\nonumber
		\begin{aligned}
			{\mathrm{Tr}_{q^n/q}(\epsilon)}^2-\mathrm{Tr}_{q^n/q}(\epsilon^2)&=\Bigg(\sum_{i=0}^{n-1}\epsilon^{q^i}\Bigg)^2-\sum_{j=0}^{n-1}\epsilon^{2q^j}\\
			&=\sum_{i=0}^{n-1}\epsilon^{2q^i}+2\cdot\sum_{0\leq i<j\leq n-1}^{}\epsilon^{q^i+q^j}-\sum_{j=0}^{n-1}\epsilon^{2q^j}\\
			&=2\cdot\mathrm{STr}_{q^n/q}(\epsilon).
		\end{aligned}
	\end{equation}
	Since $q$ is odd prime power, we have $\mathrm{STr}_{q^n/q}(\epsilon)=\{{\mathrm{Tr}_{q^n/q}(\epsilon)}^2-\mathrm{Tr}_{q^n/q}(\epsilon^2)\}\cdot2^{-1}$. Thus we have
	\begin{equation}\nonumber
		\begin{aligned}
			\mathrm{STr}_{q^n/q}(\epsilon)-a&=\{{\mathrm{Tr}_{q^n/q}(\epsilon)}^2-\mathrm{Tr}_{q^n/q}(\epsilon^2)\}\cdot2^{-1}-a\\
			&=\{({\mathrm{Tr}_{q^n/q}(\epsilon)}^2-a)-(\mathrm{Tr}_{q^n/q}(\epsilon^2)+a)\}\cdot 2^{-1},
		\end{aligned}
	\end{equation}
	which gives the desired result. 
	\ep 
	Let $a,b\in\Field_{q}$ be such that $a$ is the square of $b$. Referring the above lemma, we simplify our problem to the existence of a primitive normal pair $(\epsilon,f(\epsilon))$, where $f(x)\in\mathbb{F}_{q^n}(x)$ is a rational function of degree sum $m$ together with $\mathrm{Tr}_{q^n/q}(\epsilon)=b$ and $\mathrm{Tr}_{q^n/q}(\epsilon^2)=-a$. Throughout this article, $q$ is denoted to be any odd prime power. In fact, for $f(x)\in\Field_{q^{n}}(x)$, investigating the existence of primitive normal pair $(\epsilon,f(\epsilon))$ together with prescribed trace has been a much interesting research area. Numerous scholars have contributed to this domain, as documented in works such as \cite{WS, CW, HR, HRK, ARS, AR}.

	For $m_{1},m_{2}\in\mathbb{N}\cup \{0\}$, we define the following sets, which will play an important role throughout this article. 
	\begin{enumerate}
		\item[1.] Let us define $\mathcal{R}_{q,n}(m_{1},m_{2})$ to be the set contains with the rational functions $f(x)=\frac{f_{1}(x)}{f_{2}(x)}$, where $f_{1}$ and $f_{2}$ are irreducible polynomials over $\Field_{q^n}$ (or nonzero constants in $\Field_{q^n}$) such that gcd$(f_{1},f_{2})=1$ with $deg(f_{1})=m_{1}$ and $deg(f_{2})=m_{2}$.\\
		\item[2.] Let $\mathcal{D}_{m_{1},m_{2}}$ be the set which contains the pairs $(q,n)\in\mathbb{N}\times\mathbb{N}$ such that for any $f\in\mathcal{R}_{q,n}(m_{1},m_{2})$ and prescribed $a\in\mathbb{F}_{q}$, we get a primitive normal pair $(\epsilon,f(\epsilon))\in\Field_{q^{n}}^*\times\Field_{q^{n}}^*$ for which $\mathrm{Tr}_{q^n/q}(\epsilon)=b$ and $\mathrm{Tr}_{q^n/q}(\epsilon^2)=-a$, where $b\in\Field_{q}$ be such that $a=b^2$.\\
		\item[3.] Define , $\mathcal{R}_{q,n}(m)=\bigcup_{m_{1}+m_{2}=m}\mathcal{R}_{q,n}(m_{1},m_{2})$ and $\mathcal{D}_{m}=\bigcap_{m_{1}+m_{2}=m}\mathcal{D}_{m_{1},m_{2}}$.
	\end{enumerate}
	Clearly, $(q,1)\notin \mathcal{D}_{m_{1},m_{2}}$ as in that case we get that $\mathrm{STr}_{q^n/q}(\epsilon)=\epsilon$. Hence $(q,1)$ to be in $\mathcal{D}_{m_{1},m_{2}}$, every pair $(\epsilon,f(\epsilon))$ in $\Field_{q}$ must be primitive normal,  for any $f\in\mathcal{R}_{q,n}(m_{1},m_{2})$, which is possible only if $q-1$ is prime, that is, if $p=2$. We assume that $f(x)=x+1$. Then it implies that $(1,0)$ is a primitive normal pair. However if $n=2$, then $\mathrm{STr}_{q^n/q}(\epsilon)=\epsilon^{q+1}$ and so there does not exist any primitive normal element with subtrace $1$. Thus $(q,2)\notin \mathcal{D}_{m_{1},m_{2}}$. For $n=3$, let us take $f(x)=1/x$ and thus the problem of prescribing $0$ as subtrace is equivalent to the existence of a primitive normal pair $(\epsilon,\epsilon^{-1})$ together with ${Tr}_{q^n/q}(\epsilon^{-1})=0$, an impossibility. Hence $(q,3)\notin \mathcal{D}_{m_{1},m_{2}}$. Finally, when $n=4$, by letting $f(x)=(x-1)$ and intending to find a primitive normal pair $(\epsilon,f(\epsilon))$ together with $\mathrm{Tr}_{q^n/q}(\epsilon)=4$ and $\mathrm{Tr}_{q^n/q}(\epsilon^2)=-4^2$, that is, $\mathrm{Tr}_{q^n/q}(\epsilon-1)=0$ and $\mathrm{Tr}_{q^n/q}(\epsilon^2)=-4^2$, we get a contradiction. This gives $(q,4)\notin \mathcal{D}_{m_{1},m_{2}}$. Therefore, throughout this article, we shall consider $n\geq 5$.

	This article is outlined as follows.  Important notations and definitions to be utilized right through the article are mentioned in  Section $\ref{S2}$. Section $\ref{S3}$ contains the sufficient condition for attaining our goal. Next, in Section $\ref{S4}$, we give prime sieve condition, which relaxes the sufficient condition, followed by a modified version of prime sieve. Finally, we illustrate the application of the results of in the previous sections by taking the finite fields with characteristic $7$ and $m=2$. To be more specific, we obtain a subset of $\mathcal{D}_2$.  
	\section{Preliminaries}\lb{S2}
	This section presents an introductory overview of fundamental ideas, symbols, and definitions that will be utilized all over this article. Here, $n$ represents a positive integer, $q$ is used to denote any odd prime power and $\mathbb{F}_{q}$ is referred to be the finite field containing $q$ number of elements.
	\begin{defn}[Character]
		Let $A$ be an abelain group and $U$ be the subset of complex numbers containing elements on the circle with unit modulus. A character $\chi$ of $A$ is a homomorphism from $A$ into $U$, i.e., $\chi(a_{1}a_{2})=\chi(a_{1})\chi(a_{2})$ for all $a_{1},a_{2}\in A$. 
	\end{defn}
	The character $\chi_{1}$ defined by $\chi_{1}(a)=1$ for all $a\in A$, is said to be the trivial character of $A$. Moreover, the collection of all characters of $A$, denoted as $\widehat{A}$, forms a group under multiplication and $A\cong \widehat{A}$. Further, since $\Field_{q^{n}}^*\cong \widehat{\Field_{q^{n}}^*}$, then for any $d|q^n-1$, there are $\phi(d)$ multiplicative characters of order $d$.
	
	In the context of a finite field $\Field_{q^{n}}$, a multiplicative character relates to the multiplicative group $\Field_{q^{n}}^*$, whereas an additive character corresponds to the additive group $\Field_{q^{n}}$. Any multiplicative character can $\chi$, associated to $\Field_{q^{n}}^*$, can be extended to $\Field_{q^{n}}$ by the following rule
	\[
	\chi(0)=\begin{cases}
		
		1, ~\text{if} ~\chi= \chi_{1},  \\
		0, ~\text{if} ~\chi\neq \chi_{1}.
	\end{cases}\]
	\begin{defn}[$l$-free element]
		Let $\epsilon \in \Field_{q^{n}}^{*}$ and $l$ be any divisor of $q^n-1$. Then, $\epsilon$ is said to be a $l$-free, if $\epsilon=\delta^{j}$, where $\delta \in\Field_{q^{n}}^*$ and $j|l$ implies that $j=1$. Clearly, $\epsilon\in\Field_{q^{n}}^*$ is primitive if and only if $\epsilon$ is $(q^{n}-1)$-free.
	\end{defn}
	For any $l|(q^n-1)$, the characteristic function determining $l$-free elements of $\Field_{q^n}^*$ is given by 
	\begin{equation}\lb{E1}
		\begin{aligned}
			\rho_{l}:\Field_{q^n}^*\rightarrow \{0,1\};\epsilon \mapsto{}\theta(l)\sum_{d|l}\frac{\mu(d)}{\phi(d)}\sum_{\chi_{d}}\chi_{d}(\epsilon),
		\end{aligned}
	\end{equation}
	where $\theta(l):=\frac{\phi(l)}{l}$, $\chi_{d}$ represents a multiplicative character of order $d$ in $\widehat{\Field_{q^n}^*}$ and $\mu$ is the M\"{o}bius function.

	The additive group $\Field_{q^{n}}$ becomes an $\Field_{q}[x]$-module according to the following rule.
	$$f\circ\epsilon=\sum_{i=0}^{r}{a_{i}\epsilon^{q^{i}}}; ~\text{for}~ \epsilon\in\Field_{q^{n}}~ \text{and}~ f(x)=\sum_{i=0}^{r}{a_{i}x^{i}}\in \Field_{q}[x].$$ For $\epsilon\in\Field_{q^{n}}$, the $\Field_{q}$-order of $\epsilon$ is the monic $\Field_{q}$-divisor $f$ of $x^n-1$ of minimal degree such
	that $f\circ \epsilon= 0$.
	\begin{defn}[$g$-free element]
		Let $g|x^n-1$ and $\epsilon\in\Field_{q^{n}}$. Suppose that for any $h|g$ and $\delta\in\Field_{q^{n}}$, $\epsilon=h\circ\delta$ implies $h=1$. Then $\epsilon\in\Field_{q^{n}}$ is said to be $g$-free. It is straightforward to observe that, any element $\epsilon\in\Field_{q^{n}}$ is normal if and only if $\epsilon$ is $(x^n-1)$-free.
	\end{defn}
	For any $g|x^n-1$, the characteristic function determining the subset of $g$-free elements in $\Field_{q^{n}}$ is given as follows
	\begin{equation}\lb{E2}
		\begin{aligned}
			\kappa_{g}:\Field_{q^n}\mapsto \{0,1\};\epsilon\xrightarrow{}\Theta(g)\sum_{h|g}\frac{\mu_{q}(h)}{\Phi_{q}(h)}\sum_{\la_{h}}\la_{h}(\epsilon),
		\end{aligned}
	\end{equation}
	where $\Theta(g):=\frac{\Phi_{q}(g)}{q^{deg(g)}}$, $\la_{h}$ stands for any additive character of $\Field_{q}$-order $h$ in $\widehat{\Field_{q^n}}$ and $\mu_{q}$ is the M\"{o}bius function for the set of polynomials over $\Field_{q}$ is defined as follows:
	\[
	\mu_{q}(f)=\begin{cases}
		(-1)^{r}, & \text{if $f$ is product of $r$ distinct monic irreducible polynomials},\\
		0, & \text{otherwise}.
	\end{cases}
	\]
	
	For any $a\in\Field_{q}$, the characteristic function for the subset of $\Field_{q^n}$ containing elements with $\mathrm{Tr}_{{q^n}/q}(\epsilon)=a$ is given as follows:
	\begin{equation}\nonumber
		\tau_{a}:\Field_{q^n}\rightarrow \{0,1\};\epsilon \mapsto \frac{1}{q}\sum_{\la\in{\widehat{{\Field}_{q}}}}^{}\la(\mathrm{Tr}_{{q^n}/q}(\epsilon)-a).
	\end{equation} 
	Now, any additive character $\la$ of $\Field_{q}$ can be represented using the canonical additive character $\la_{0}$ as $\la(\epsilon)=\la_{0}(t\epsilon)$, where $t$ is an element of $\Field_{q}$ that corresponds to $\la$. Thus
	\begin{equation}\lb{E3}
		\begin{aligned}
			\tau_{a}(\epsilon)&= \frac{1}{q}\sum_{t\in\Field_{q}}^{}\la_{0}(\mathrm{Tr}_{{q^n}/q}(t\epsilon)-ta)\\
			&=\frac{1}{q}\sum_{t\in\Field_{q}}^{}{\widehat{\la}_{0}}(t\epsilon)\la_{0}(-ta),
		\end{aligned}
	\end{equation}
	where $\widehat{\la}_{0}$ is the additive character of $\Field_{q^n}$, that is given by $\widehat{\la}_{0}(\epsilon)=\la_{0}(\mathrm{Tr}_{{q^n}/q}(\epsilon))$.

	The following results are given by Weil \cite{AW1948} as described in \cite{TC2006} at $(1.1)$ to $(1.3)$.
	\begin{lem}\lb{L2.1}
		Consider $f(x)=\prod_{i=1}^{r}f_{i}(x)^{a_{i}}\in\mathbb{F}_{q^n}(x)$ be such that $f_{i}$'s are irreducible polynomials over $\Field_{q^{n}}$ and $a_{i}$'s are nonzero integers. Suppose that $\chi\in\widehat{{\Field}_{q^n}^*}$ be a multiplicative character having order $d$. Further, assume $f(x)$ to be a rational function, which is not equal to $ch(x)^{d}$, for $c\in\Field_{q^n}^*$ and $h(x)\in\mathbb{F}_{q^n}(x)$. Then  $$\Bigg|\sum_{\epsilon\in\Field_{q^n}, f(\epsilon)\neq \infty }^{}\chi(f(\epsilon))\Bigg|\leq\Bigg(\sum_{i=1}^{r}deg(f_{i})-1\Bigg)q^{n/2}.$$	
	\end{lem}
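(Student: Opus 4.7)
The plan is to reduce this character sum estimate to Weil's theorem on the number of rational points on algebraic curves over finite fields, via the standard auxiliary curve construction. Since the result is attributed to Weil \cite{AW1948} and the citation points to the compact treatment in \cite{TC2006}, my proposal is to outline the reduction rather than to redo the full algebraic geometry from scratch.

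First, I would introduce the affine curve $C_f : y^d = f(x)$ defined over $\Field_{q^n}$. The hypothesis that $f(x) \neq c\, h(x)^d$ for any constant $c \in \Field_{q^n}^*$ and rational function $h$ is exactly the condition needed to guarantee that $C_f$ is geometrically irreducible; otherwise, the equation $y^d = f(x)$ would force $f$ to be a $d$-th power up to a constant. Once irreducibility is secured, one passes to a smooth projective model $\widetilde{C}_f$ and bounds its genus $g$ via the Riemann--Hurwitz formula applied to the degree-$d$ cover $\widetilde{C}_f \to \mathbb{P}^1$. The ramification is supported precisely at the zeros and poles of $f$ and possibly at infinity, which are controlled by the data $(f_1,\dots,f_r)$ and their multiplicities $(a_1,\dots,a_r)$.

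Second, I would link the target sum $S(\chi) = \sum_{\epsilon : f(\epsilon)\neq\infty} \chi(f(\epsilon))$ to the point count $\#\widetilde{C}_f(\Field_{q^n})$. For any $\alpha \in \Field_{q^n}^*$, the number of $d$-th roots of $\alpha$ in $\Field_{q^n}$ equals $\sum_{j=0}^{d-1}\chi^j(\alpha)$, where $\chi$ is a fixed multiplicative character of exact order $d$. Summing this identity over the relevant $\epsilon$ and accounting for the bounded number of zeros/poles of $f$ as well as the points of $\widetilde{C}_f$ above infinity, one obtains a relation of the shape
\[
\#\widetilde{C}_f(\Field_{q^n}) \;=\; q^n \;+\; \sum_{j=1}^{d-1} S(\chi^j) \;+\; E,
\]
where $E$ is an explicit error term controlled by $\sum_i \deg(f_i)$. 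Weil's theorem $|\#\widetilde{C}_f(\Field_{q^n}) - (q^n+1)| \le 2g\, q^{n/2}$ then yields a bound on the full sum $\sum_{j=1}^{d-1} S(\chi^j)$; to isolate a single $S(\chi)$ with the sharp constant $\sum_i \deg(f_i) - 1$, one repeats the argument with each character power $\chi^j$ of order $d' \mid d$ and disentangles contributions by M\"{o}bius-type inversion, or equivalently works directly with the $L$-function factorization provided by Weil.

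The main obstacle is tracking the Riemann--Hurwitz contributions and the correction term $E$ carefully enough to obtain exactly the linear coefficient $\sum_i \deg(f_i) - 1$ rather than some looser bound proportional to $d \sum_i \deg(f_i)$. This delicate bookkeeping is the technical core of Weil's original proof, and since the lemma is quoted verbatim from \cite{TC2006}, Equations (1.1)--(1.3), I would cite it directly at this point rather than reproduce the genus calculation, as it is entirely orthogonal to the number-theoretic application developed in the rest of the paper.
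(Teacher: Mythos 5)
The paper never proves this lemma at all: it is stated as a quoted classical result, ``given by Weil \cite{AW1948} as described in \cite{TC2006} at $(1.1)$ to $(1.3)$,'' so your decision to cite rather than reconstruct the argument is exactly what the paper does, and your proposal is acceptable on those terms. Your sketch of the standard reduction (the superelliptic curve $y^d=f(x)$, point counting, genus via Riemann--Hurwitz, and Weil's Riemann hypothesis for function fields) is a fair outline, with the one caveat that the ``M\"obius-type inversion'' step would not by itself isolate a single sum $S(\chi)$ from $\sum_{j=1}^{d-1}S(\chi^j)$ --- the correct mechanism is the factorization of the zeta function of the curve into $L$-functions, one per character $\chi^j$, whose degrees give the constant $\sum_i \deg(f_i)-1$; you do name this alternative, and it is the route that actually works.
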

	\begin{lem}[{\cite{CM2000}}]\lb{L2.2}
		Let $\chi$ be non-trivial multiplicative of order $r$ and $\la$ be a non-trivial additive character on $\Field_{q^n}$. Let $f$ and $g$ be the rational functions in $\Field_{q^n}(x)$ such that  $f\neq yh^r$, for any $y\in\Field_{q^n}$, and $g\neq h^p-h+\beta$, where $h\in\Field_{q^n}(x)$ and $\beta\in\Field_{q^n}$.   Then, we have
		\begin{equation} \nonumber
			\begin{aligned}
				\Bigg|\sum_{\epsilon\in \Field_{q^{n}}\smallsetminus\mathcal{S}}\chi(f(\epsilon))\la(g(\epsilon))\Bigg|\leq(deg(g_\infty)+l+l'-l''-2)q^{n/2},
			\end{aligned}
		\end{equation}
		where $\mathcal{S}$ is set the poles of $f$ and $g$, $(g)_\infty$ is the pole divisor of $g$,  $l$ is the number of distinct zeros and (non-infinite) poles of $f$, $l'$ is the number of distinct poles of $g$ (including $\infty$) and $l''$ is the number of finite poles of $f$ that are poles or zeros of $g$.
	\end{lem}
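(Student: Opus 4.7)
This is a classical Weil-type estimate for hybrid (multiplicative times additive) character sums on the affine line. My plan is to view the sum as a Frobenius trace on $\ell$-adic cohomology of an open curve, and to extract the bound from the Riemann hypothesis for curves together with an Euler--Poincar\'e calculation.

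The first step is to construct a rank-one lisse sheaf $\mathcal{L}$ on $\mathbb{A}^1_x \setminus \mathcal{S}$ over $\Field_{q^n}$ whose Frobenius trace at $\epsilon$ is $\chi(f(\epsilon))\lambda(g(\epsilon))$; equivalently, one takes the compositum of the Kummer cover $y^r = f(x)$ and the Artin--Schreier cover $z^p - z = g(x)$ of $\mathbb{P}^1_x$, and pushes down the appropriate one-dimensional character of the Galois group. The hypotheses $f \neq y h^r$ and $g \neq h^p - h + \beta$ are precisely the statements that the Kummer and Artin--Schreier pieces of $\mathcal{L}$ are geometrically non-trivial; consequently $\mathcal{L}$ itself is non-trivial, $H^0_c$ and $H^2_c$ vanish, and the sum in question equals $-\mathrm{Tr}(\mathrm{Frob}_{q^n}\mid H^1_c(\mathbb{A}^1_x \setminus \mathcal{S}, \mathcal{L}))$.

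By Deligne's purity theorem, every Frobenius eigenvalue on $H^1_c$ has absolute value $q^{n/2}$, so $|S| \le (\dim H^1_c)\cdot q^{n/2}$. The second step is to compute this dimension by Grothendieck--Ogg--Shafarevich. The tame contributions, coming from the finite zeros and poles of $f$ together with the finite tame poles of $g$, give $l + l' - l''$ after correcting for the overlap $l''$ between finite poles of $f$ and poles or zeros of $g$; the Euler characteristic of $\mathbb{P}^1$ minus a finite set of points supplies the $-2$; and the wild Artin--Schreier part of $\mathcal{L}$ at each pole of $g$ contributes the local Swan conductor, which by Hasse--Arf adds up to $\deg((g)_\infty)$. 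Combining these pieces yields the claimed bound $\deg((g)_\infty) + l + l' - l'' - 2$.

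The delicate part is this last computation: one must determine at every singular point whether the local break of $\mathcal{L}$ is zero or positive, and verify that shared zeros/poles of $f$ and $g$ are not double-counted. The hypothesis $g \neq h^p - h + \beta$ is essential here, because locally it guarantees that the Swan conductor of the Artin--Schreier factor equals the pole order of $g$ (after absorbing any $h^p - h$ coboundary), and the hypothesis $f \neq y h^r$ plays the analogous role for the Kummer factor. Matching the resulting local invariants uniformly with the combinatorial quantities $l$, $l'$, $l''$, and $\deg((g)_\infty)$ is the main obstacle, and any miscount at a point where $f$ and $g$ interact nontrivially propagates directly into the final bound.
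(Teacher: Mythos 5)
First, a point of order on the comparison: the paper does not prove this lemma at all --- it is imported, with citation, from Castro--Moreno \cite{CM2000} --- so your proposal can only be measured against the standard proofs in the literature. Measured that way, your framework is the right one: realizing $\epsilon\mapsto\chi(f(\epsilon))\la(g(\epsilon))$ as the trace function of the rank-one sheaf $\mathcal{L}_\chi(f)\otimes\mathcal{L}_\la(g)$ on the complement of its singular points in $\mathbb{P}^1$, translating the hypotheses $f\neq yh^r$ and $g\neq h^p-h+\beta$ into geometric nontriviality (so $H^0_c=H^2_c=0$), and bounding the sum by $\dim H^1_c\cdot q^{n/2}$ via Weil~II together with Grothendieck--Ogg--Shafarevich is precisely the modern form of the $L$-function/Riemann-hypothesis-for-curves argument of the original source.

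The genuine gap is the one you flag yourself: the entire quantitative content of the lemma --- the constant $\deg((g)_\infty)+l+l'-l''-2$ --- lives in the Euler-characteristic computation that you defer as ``the main obstacle''. What your plan actually establishes is only $\left|\sum\right|\leq \dim H^1_c\cdot q^{n/2}$; converting $\dim H^1_c$ into the stated combinatorial expression requires pinning down the singular set (zeros and poles of $f$ together with poles of $g$ --- note that zeros of $g$ are \emph{not} singularities of the Artin--Schreier factor, so it is not even clear a priori how $l''$, which counts poles of $f$ sitting at poles \emph{or zeros} of $g$, can legitimately lower the dimension), bounding each Swan conductor at a pole of $g$ by the pole order, and handling $\infty$ and all coincidences among these points. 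That this bookkeeping is where the lemma actually lives is shown by the test case $f=1/x$, $g=x$ over $\Field_{q^n}$: the sum is a Gauss sum of modulus exactly $q^{n/2}$, yet here $l''=1$ (since $0$ is a pole of $f$ and a zero of $g$), and unless the zero of $f$ at infinity is counted in $l$, the claimed constant evaluates to $0$ --- so the conventions you have left unresolved are exactly the difference between a true and a false statement. Two smaller inaccuracies: Weil~II gives $|\alpha|\leq q^{n/2}$ for Frobenius eigenvalues on $H^1_c$, not equality (purity holds here only after checking that all local monodromies are nontrivial, so that $j_!\mathcal{L}=j_*\mathcal{L}$); and ``both factors nontrivial, hence $\mathcal{L}$ nontrivial'' needs an argument --- e.g., that a tame sheaf tensored with a genuinely wild Artin--Schreier sheaf (after reducing $g$ modulo $h^p-h$ so its pole orders are prime to $p$) remains wildly ramified, hence nontrivial.
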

	\begin{lem}\textbf{({\cite{RH}}, Theorem 5.38)}\lb{L2.6}
		Let $f\in\Field_{q}[x]$ of degree $n\geq 1$ with gcd$(n,q)=1$ and $\la$ be a non-trivial additive character of $\Field_{q}$. Then 
		$$\Bigg|\sum_{\alpha\in\Field_{q}}^{}\la(f(\epsilon))\Bigg|\leq(n-1)q^{1/2}.$$
	\end{lem}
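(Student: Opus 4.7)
The plan is to deduce the bound from the Hasse--Weil inequality for curves, applied to an Artin--Schreier cover. First I would reduce to the canonical additive character: every non-trivial additive character of $\Field_q$ has the form $\la(y)=\la_0(ty)$ for a unique $t\in\Field_q^*$, where $\la_0=\psi\circ\mathrm{Tr}_{q/p}$ and $\psi$ denotes the canonical additive character of $\Field_p$. Since $tf(x)$ is still a polynomial of degree $n$ coprime to $q$, it suffices to bound $S:=\sum_{x\in\Field_q}\la_0(f(x))$.

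Next I would introduce the smooth projective model of the Artin--Schreier curve $C:y^p-y=f(x)$ over $\Field_q$. Because $\gcd(n,p)=1$, the degree-$p$ map $C\to\mathbb{P}^1$ is totally ramified only above $\infty$ with conductor $n+1$, and Riemann--Hurwitz gives $g_C=(p-1)(n-1)/2$. For each $x\in\Field_q$, orthogonality of $\Field_p$-characters shows that the fiber $y^p-y=f(x)$ contains $p$ points of $\Field_q$ when $\mathrm{Tr}_{q/p}(f(x))=0$ and none otherwise; assembling these contributions together with the unique point at infinity yields
\[
\#C(\Field_q)\;=\;1+q+\sum_{c=1}^{p-1}S_c,\qquad S_c\;:=\;\sum_{x\in\Field_q}\la_0(cf(x)).
\]
Combined with the Hasse--Weil estimate $|\#C(\Field_q)-q-1|\le 2g_C\sqrt{q}$, this gives $\bigl|\sum_{c=1}^{p-1}S_c\bigr|\le (p-1)(n-1)\sqrt q$.

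The main obstacle is extracting a bound on $S_1$ alone from this aggregated estimate. The standard remedy is the decomposition of the zeta function of $C$ under the $\Field_p$-action of the Artin--Schreier cover: its non-trivial part factors as a product over $c\in\Field_p^*$ of $L$-functions, each a polynomial of degree $2g_C/(p-1)=n-1$ whose reciprocal roots have absolute value $\sqrt q$ by the Riemann Hypothesis for curves. The Frobenius trace on the isotypic piece indexed by $c$ is precisely $-S_c$, which yields $|S_c|\le (n-1)\sqrt q$ for every $c\in\Field_p^*$ individually; specialising to $c=1$ completes the proof.
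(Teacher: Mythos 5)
The paper contains no proof of this lemma: it is quoted, with attribution, as Theorem 5.38 of Lidl and Niederreiter \cite{RH}, so there is no internal argument for you to match, and the only meaningful comparison is with the proof in that source.

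Your proposal is a correct proof in outline, via the standard algebraic-geometric route. The reduction to the canonical character, the Artin--Schreier cover $y^p-y=f(x)$ (smooth in the affine part, totally ramified above $\infty$ precisely because $\gcd(n,p)=1$, of genus $(p-1)(n-1)/2$), the point count $\#C(\Field_q)=1+q+\sum_{c\neq 0}S_c$, and the disaggregation of the Hasse--Weil estimate through the factorization $Z_C(T)=Z_{\mathbb{P}^1}(T)\prod_{c\in\Field_p^*}L(T,\chi_c)$ are all invoked accurately, including the sign in identifying $-S_c$ with the trace of Frobenius on the $c$-isotypic piece. Two ingredients are asserted rather than argued and each deserves a line: that every $L(T,\chi_c)$ is a \emph{polynomial} (not merely a rational function), which is the standard Artin $L$-function input for abelian covers and is on the same level of depth as the Riemann Hypothesis for curves that you cite; and that its degree is exactly $n-1$, which follows either from the conductor exponent $n+1$ of each non-trivial $\chi_c$ at infinity, or from the action of $\mathrm{Gal}(\mathbb{Q}(\zeta_p)/\mathbb{Q})$ permuting the $p-1$ factors so that all share the common degree $2g_C/(p-1)$; your phrase ``$2g_C/(p-1)=n-1$'' presumes this equidistribution of degrees. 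By contrast, the proof in the cited source is elementary and self-contained: Lidl and Niederreiter first produce complex numbers $\omega_1,\dots,\omega_{n-1}$ with $\sum_{x\in\Field_{q^s}}\la^{(s)}(f(x))=-(\omega_1^s+\cdots+\omega_{n-1}^s)$ for every $s\geq 1$ by power-series manipulation, and then pin down $|\omega_j|=q^{1/2}$ using Gauss sums and the Davenport--Hasse relation; that is where the hypothesis $\gcd(n,q)=1$ enters their argument, whereas in yours it enters through the ramification structure at infinity. Your route costs the full strength of Weil's theorem for curves but is conceptually uniform --- it is the same mechanism underlying Lemmas \ref{L2.1} and \ref{L2.2} of the paper --- while the source's argument stays within character-sum machinery but does not extend beyond this special case.
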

	For $l$, a positive integer (or a monic polynomial over $\Field_{q}$), we use $\om(l)$ to represent  the cardinality of the set which contains distinct prime divisors (irreducible factors ) of $l$ and $W(l)$ to represent the cardinality of the set which contains square-free divisors (square-free factors) of $l$, that is $W(l)=2^{\om(l)}$. 
	\begin{lem}[\cite{SS}, Lemma 3.7]\lb{L2.3}
		Let $r>0$ be a real number and $m$ be a positive integer. Then $W(m)<\mathcal{C}\cdot m^{\frac{1}{r}}$, where $\mathcal{C}=\frac{2^w}{{({p_{1}p_{2}\ldots p_{w})}}^{\frac{1}{r}}}$ and $p_{1},p_{2},\ldots,p_{w}$ are primes $\leq 2^{r}$ that divide $m$.
	\end{lem}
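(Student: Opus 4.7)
The plan is to control $W(m) = 2^{\omega(m)}$ by splitting the prime divisors of $m$ into those bounded above by the threshold $2^r$ and those exceeding it, and then exploiting the elementary inequality $q^{1/r} > 2$ whenever $q > 2^r$. The threshold $2^r$ is chosen precisely to convert a crude factor of $2$ into a useful factor of $q^{1/r}$.

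Concretely, I would write the set of prime divisors of $m$ as the disjoint union $\{p_1,\dots,p_w\} \cup \{q_1,\dots,q_s\}$, where the $p_i$ are the primes dividing $m$ with $p_i \leq 2^r$ and the $q_j$ are those with $q_j > 2^r$. Then $\omega(m) = w + s$, so $W(m) = 2^w \cdot 2^s$. For each $j$, the defining inequality $q_j > 2^r$ is equivalent to $q_j^{1/r} > 2$, so multiplying over $j = 1,\dots,s$ gives $2^s \leq (q_1 q_2 \cdots q_s)^{1/r}$ (strict as soon as $s \geq 1$). Since $p_1 \cdots p_w \cdot q_1 \cdots q_s$ is a squarefree divisor of $m$, we have $q_1 \cdots q_s \leq m/(p_1 \cdots p_w)$, and substituting this into the previous bound yields
\begin{equation}\nonumber
W(m) = 2^w \cdot 2^s \leq 2^w \cdot \Bigl(\tfrac{m}{p_1 \cdots p_w}\Bigr)^{1/r} = \frac{2^w}{(p_1 p_2 \cdots p_w)^{1/r}} \cdot m^{1/r} = \mathcal{C} \cdot m^{1/r},
\end{equation}
which is the target estimate.

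There is no real obstacle; the entire argument is a two-line book-keeping exercise once the threshold $2^r$ is introduced. The only subtle point is the strictness of the inequality, which needs the presence of at least one ``large'' prime divisor $q_j > 2^r$ (so that $2 < q_j^{1/r}$ gives a strict improvement); if every prime divisor of $m$ is $\leq 2^r$, then the strict ``$<$'' has to be replaced by ``$\leq$'' in degenerate cases such as $m$ squarefree with all factors small, but the bound is still valid as written in applications since one simply chooses $r$ so that the product $p_1 \cdots p_w$ is a proper divisor of $m$. The constant $\mathcal{C}$ records exactly how much we give back for the small primes, and the rest is just the geometric estimate $(\text{large primes})^{1/r} > 2^s$.
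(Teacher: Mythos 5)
Your proof is correct, and there is nothing in the paper to compare it against: the paper states this lemma without proof, quoting it directly from \cite{SS} (Lemma 3.7 of Cohen--Huczynska), and your argument --- splitting the prime divisors of $m$ at the threshold $2^r$, using $q^{1/r}>2$ for each prime $q>2^r$, and bounding the product of the large primes by $m/(p_1\cdots p_w)$ via the radical --- is exactly the standard argument behind that cited result. Your caveat about strictness is also accurate: if $m$ is squarefree and all of its prime divisors are at most $2^r$ (e.g.\ $m=6$, $r=3$), then $W(m)=\mathcal{C}\cdot m^{1/r}$ exactly, so the strict inequality as stated admits degenerate exceptions; this is a minor imprecision in the statement as quoted, and it is harmless for the way the lemma is applied in the paper, where only an upper estimate is needed.
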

	\begin{lem}[\cite{HWRJ}, Lemma 2.9]\lb{L2.4}
		Suppose that $q$ be a prime power, $n$ be a natural number and $n'$=gcd$(n,q-1)$. Then we have $W(x^n-1)\leq 2^{\frac{1}{2}\{n+n'\}}$, which gives $W(x^n-1)\leq 2^n$. Further, $W(x^n-1)=2^n$ if and only if $n|q-1$. In addition, if  $n\nmid q-1$, then $W(x^n-1)\leq 2^{\frac{3}{4}n}$.
	\end{lem}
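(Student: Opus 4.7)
The plan is to translate everything into counting distinct monic irreducible factors, since $W(x^n-1) = 2^{\om(x^n-1)}$. The first step is a standard separable reduction: writing $n = p^s m$ with $p=\mathrm{char}(\Field_q)$ and $\gcd(m,p)=1$, one has $x^n-1 = (x^m-1)^{p^s}$, so $\om(x^n-1) = \om(x^m-1)$, which reduces the problem to the case of $m$ coprime to $q$.

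Next I would invoke the cyclotomic decomposition $x^m - 1 = \prod_{d \mid m}\Phi_d(x)$. Over $\Field_q$, each $\Phi_d$ splits into $\phi(d)/o_d(q)$ distinct irreducible factors, where $o_d(q)$ is the multiplicative order of $q$ modulo $d$. The key observation driving everything is that $o_d(q) = 1$ exactly when $d \mid q-1$, while otherwise $o_d(q) \geq 2$. Splitting the sum $\om(x^m-1) = \sum_{d\mid m}\phi(d)/o_d(q)$ according to whether $d \mid q-1$ and invoking $\sum_{d\mid k}\phi(d) = k$, a direct bookkeeping computation yields
\begin{equation*}
\om(x^m - 1) \leq \sum_{d \mid \gcd(m,q-1)}\phi(d) + \tfrac{1}{2}\sum_{\substack{d\mid m\\ d\nmid q-1}}\phi(d) = \tfrac{1}{2}\bigl(m + \gcd(m,q-1)\bigr).
\end{equation*}

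Since $m \mid n$, one has $m \leq n$ and $\gcd(m,q-1) \mid n'$, so the estimate upgrades to $\om(x^n-1) \leq (n+n')/2$, which is the first claim, and $W(x^n-1) \leq 2^n$ follows at once from $n' \leq n$. For the equality case I would argue both directions: $W(x^n-1) = 2^n$ forces $n' = n$, i.e.\ $n \mid q-1$; conversely, if $n \mid q-1$ then $x^n - 1$ divides $x^{q-1} - 1 = \prod_{a \in \Field_q^*}(x-a)$, hence splits into $n$ distinct linear factors, giving $W(x^n-1) = 2^n$ exactly. Finally, when $n \nmid q-1$, $n'$ is a proper divisor of $n$, so $n' \leq n/2$ and $(n+n')/2 \leq 3n/4$.

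The only genuinely delicate point is the inequality $o_d(q) \geq 2$ for $d \nmid q-1$ together with the bookkeeping that bundles the two parts of the sum into the clean expression $(m + \gcd(m,q-1))/2$; passing from $m$ to $n$ via $m \leq n$ and $\gcd(m,q-1) \mid n'$ is then formal, and no deeper input is required.
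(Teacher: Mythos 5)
Your proof is correct: the separable reduction $x^n-1=(x^m-1)^{p^s}$, the cyclotomic factorization $x^m-1=\prod_{d\mid m}\Phi_d(x)$ with $\phi(d)/o_d(q)$ distinct irreducible factors per $\Phi_d$, the dichotomy $o_d(q)=1$ iff $d\mid q-1$ (else $o_d(q)\geq 2$), and the passage from $m$ to $n$ all hold, and the equality case and the $2^{3n/4}$ bound follow exactly as you argue. Note that the paper itself offers no proof of this statement --- it is quoted from Lenstra and Schoof \cite{HWRJ} --- and your argument is essentially the standard one underlying that reference, so you have in effect supplied the proof the paper outsources to its citation.
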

	\section{Main Result}\lb{S3}
	Let $q,n,m\in\mathbb{N}$ such that $q$ is an odd prime power, $n\geq 5$. Let $l_{1},l_{2}|q^n-1$ and $g_{1},g_{2}|x^n-1$. Also, let $f(x)\in\mathcal{R}_{q,n}(m)$ and $a,b\in\Field_{q}^*$ be such that $a$ is the square of $b$. Thus, to show that $\epsilon\in\Field_{q^{n}}^*$ is $g_1$-free, it is equivalent to prove that $\epsilon$ is $L_{g_1}$-free, where $L_{g_1}$ is the largest divisor of $g_1$ such that $(x-1)\nmid L_{g_1}$. We consider $\mathfrak{C}_{f,a,b}(l_{1},l_{2},L_{g_1},g_{2})$ as the number of $\epsilon\in\Field_{q^{n}}^*$ such that $\epsilon$ is $l_{1}$-free, $L_{g_1}$-free and $f(\epsilon)$ is $l_{2}$-free, $g_{2}$-free together with $\mathrm{Tr}_{q^n/q}(\epsilon)=b$ and $\mathrm{Tr}_{q^n/q}(\epsilon^2)=-a$. Set $L:=L_{x^n-1}$.

	\begin{thm}\lb{T3.1}
		Let $n,m,q\in\mathbb{N}$ be such that $q$ is an odd prime power and $n\geq 5$. Assume that
		$$q^{\frac{n}{2}-2}>2m{W(q^n-1)}^2W(L)W(x^n-1).$$
		Then $(q,n)\in\mathcal{D}_{m}$.
	\end{thm}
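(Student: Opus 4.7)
The plan is to apply the classical character-sum sieve. Set $\mathfrak{N}:=\mathfrak{C}_{f,a,b}(q^n-1,q^n-1,L,x^n-1)$; showing $\mathfrak{N}>0$ produces an $\epsilon\in\Field_{q^n}^{*}$ that is simultaneously primitive and $L$-free --- and hence normal, since $\mathrm{Tr}_{q^n/q}(\epsilon)=b\neq 0$ automatically gives $(x-1)$-freeness --- with $f(\epsilon)$ also primitive and normal and with $\mathrm{Tr}_{q^n/q}(\epsilon)=b$, $\mathrm{Tr}_{q^n/q}(\epsilon^2)=-a$. Lemma~\ref{L1.2} then delivers $\mathrm{STr}_{q^n/q}(\epsilon)=a$, which is what the membership $(q,n)\in\mathcal{D}_m$ demands. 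Using the indicator functions from \eqref{E1}, \eqref{E2} and \eqref{E3},
\begin{equation*}
\mathfrak{N} \;=\; \sum_{\epsilon\in\Field_{q^n}^{*},\, f(\epsilon)\neq 0,\infty} \rho_{q^n-1}(\epsilon)\,\rho_{q^n-1}(f(\epsilon))\,\kappa_{L}(\epsilon)\,\kappa_{x^n-1}(f(\epsilon))\,\tau_{b}(\epsilon)\,\tau_{-a}(\epsilon^{2}).
\end{equation*}

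\textbf{Main term versus error.} Expanding every $\rho$, $\kappa$ and $\tau$ and interchanging summations rewrites $\mathfrak{N}$ as a multiple sum indexed by square-free $d_1,d_2\mid q^n-1$, square-free monic $h_1\mid L$, $h_2\mid x^n-1$, and $t_1,t_2\in\Field_q$, weighted by M\"obius symbols, with inner factor
\begin{equation*}
S(d_1,d_2,h_1,h_2,t_1,t_2) \;=\; \sum_{\epsilon}\chi_{d_1}(\epsilon)\,\chi_{d_2}(f(\epsilon))\,\widehat{\lambda}_{h_1}(\epsilon)\,\widehat{\lambda}_{h_2}(f(\epsilon))\,\widehat{\lambda}_{0}(t_1\epsilon+t_2\epsilon^{2}).
\end{equation*}
The principal configuration (all parameters equal to $1$ or $0$) contributes exactly $\theta(q^n-1)^{2}\Theta(L)\Theta(x^n-1)\,q^{n-2}$. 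For every non-principal tuple I combine the multiplicative and additive pieces into a single hybrid sum $\sum_{\epsilon}\chi_{*}(F(\epsilon))\,\psi_{*}(G(\epsilon))$, where $F$ is a rational function built from $x$ and $f(x)$ with total degree at most $m+1$ counting distinct irreducible factors, and $G$ has pole divisor of degree bounded by $m$ plus the contribution of $t_1 x+t_2 x^{2}$.

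\textbf{Weil bounds and conclusion.} Lemma~\ref{L2.2} (or Lemma~\ref{L2.1} when only multiplicative characters are non-trivial and Lemma~\ref{L2.6} when only additive ones are) bounds each non-principal $|S|$ by a small linear-in-$m$ multiple of $q^{n/2}$; a careful tally of distinct zeros, poles and the contribution of $t_2\epsilon^{2}$ collapses this to the coefficient $2m$ in the worst case. Summing these bounds over all non-principal tuples introduces a combinatorial factor $W(q^n-1)^{2}W(L)W(x^n-1)$ from the divisor enumeration (the M\"obius weights killing non-square-free divisors) together with a factor $q^{2}$ from the $t_i$ sums that cancels exactly against the $q^{-2}$ prefactor of the two $\tau$'s. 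The $\theta$ and $\Theta$ prefactors reproduce those of the main term, so the triangle inequality yields
\begin{equation*}
\mathfrak{N} \;\ge\; \theta(q^n-1)^{2}\Theta(L)\Theta(x^n-1)\bigl(q^{n-2}-2m\,W(q^n-1)^{2}W(L)W(x^n-1)\,q^{n/2}\bigr),
\end{equation*}
and the hypothesis $q^{n/2-2}>2m\,W(q^n-1)^{2}W(L)W(x^n-1)$ is exactly the condition making this positive.

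\textbf{Main obstacle.} The delicate point is verifying the hypotheses of Lemma~\ref{L2.2} uniformly across all non-principal tuples: the composite rational function $F(x)$ must not be of the form $c\,h(x)^{r}$ with $r$ the order of $\chi_{*}$, and $G(x)$ must not be of Artin--Schreier form $h^{p}-h+\beta$. Coprimality of $f_1$ and $f_2$ in the definition of $\mathcal{R}_{q,n}(m)$ together with $p$ odd --- so that the $t_2 x^{2}$ term cannot be absorbed into $h^{p}-h$ --- ensures these conditions hold, with the residual configurations handled by the purely multiplicative or purely additive Weil bounds. Extracting the coefficient exactly $2m$ in front of $q^{n/2}$, rather than a larger multiple of $m$, is the main quantitative challenge and requires the case analysis above depending on which of $\chi_{d_1}$, $\chi_{d_2}$, $\widehat{\lambda}_{h_1}$, $\widehat{\lambda}_{h_2}$ and the additive character attached to $(t_1,t_2)$ are trivial.
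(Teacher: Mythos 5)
Your proposal is correct and follows essentially the same route as the paper's own proof: expand $\mathfrak{C}_{f,a,b}(q^n-1,q^n-1,L,x^n-1)$ via the characteristic functions, isolate the principal tuple as the $\theta(q^n-1)^2\Theta(L)\Theta(x^n-1)\,q^{n-2}$ main term, bound every non-principal tuple by $2mq^{n/2}$ (after the $q^{2}$/$q^{-2}$ cancellation you note) using Lemma~\ref{L2.2}, with Lemma~\ref{L2.1} and Lemma~\ref{L2.6} covering the degenerate configurations, and read off positivity from the stated hypothesis. The case analysis you flag as the main obstacle is exactly what the paper carries out in detail --- an Artin--Schreier-shaped $R_2$ forces $h_1=h_2=1$ and $t=u=0$, and a perfect-power $R_1$ forces $d_1=d_2=1$, leaving the purely multiplicative and purely additive Weil bounds for those residual tuples --- and your sketch of how coprimality of $f_1,f_2$ and odd $p$ resolve it matches the paper's argument.
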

	\bp
	With the purpose of proving the above result, initially we demonstrate that $\mathfrak{C}_{f,a,b}(l_{1},l_{2},L_{g_1},g_{2})>0$ for any $f\in\mathcal{R}_{q,n}(m)$ and for prescribed $a,b\in\Field_{q}^*$, where $a$ is the square of $b$. Let us consider the sets $\mathcal{Z}=\{\epsilon\in\Field_{q^{n}}:f_{1}(\epsilon)=0, f_{2}(\epsilon)=0\}$ and $\mathcal{Z}_{1}=\mathcal{Z}\cup \{0\}$. Then by the definition, $\mathfrak{C}_{f,a,b}(l_{1},l_{2},L_{g_1},g_{2})$ is expressed as follows.
	\begin{equation}\nonumber
		\begin{aligned}
			\sum_{\epsilon\in\Field_{q^{n}}\smallsetminus \mathcal{Z}_{1}}^{}\rho_{l_{1}}(\epsilon)\rho_{l_{2}}(f(\epsilon))\kappa_{L_{g_1}}(\epsilon)\kappa_{g_{2}}(f(\epsilon))\tau_{b}(\epsilon)\tau_{-a}(\epsilon^2).
		\end{aligned}
	\end{equation}
	Using (\ref{E1}), (\ref{E2}) and (\ref{E3}), we have
	\begin{equation}\lb{E4}
		\begin{aligned}
			\mathfrak{C}_{f,a,b}(l_{1},l_{2},L_{g_1},g_{2})=\mathcal{G}\underset{\underset{h_{1}|L_{g_1},h_{2}|g_{2}}{d_{1}|l_{1}, d_{2}|l_{2}}}{\sum}\frac{\mu}{\phi}(d_{1},d_{2},h_{1},h_{2})\underset{\underset{\la_{h_{1},\la_{h_{2}}}}{\chi_{d_{1}},\chi_{d_{2}}}}{\sum}\boldsymbol{\chi}_{f,a,b}(d_{1},d_{2},h_{1},h_{2}) 
		\end{aligned}
	\end{equation}
	where $\mathcal{G}=\frac{\theta(l_{1})\theta(l_{2})\Theta(L_{g_1})\Theta(g_{2})}{q^2}$, $\frac{\mu}{\phi}(d_{1},d_{2},h_{1},h_{2})=\frac{\mu(d_{1})\mu(d_{2})\mu_{q}(h_{1})\mu_{q}(h_{2})}{\phi(d_{1})\phi(d_{2})\Phi_{q}(h_{1})\Phi_{q}(h_{2})}$ and
	\begin{equation}\nonumber
		\begin{aligned}
			\boldsymbol{\chi}_{f,a,b}(d_{1},d_{2},h_{1},h_{2}) &=\sum_{t,u\in\Field_{q}}^{}\la_{0}(-bt+au)\sum_{\epsilon\in\Field_{q^{n}}\smallsetminus \mathcal{Z}_{1}}^{}\chi_{d_{1}}(\epsilon)\chi_{d_{2}}(f(\epsilon))\la_{h_{1}}(\epsilon)\la_{h_{2}}(f(\epsilon))\\&\times \widehat{\la_{0}}(t\epsilon+u\epsilon^2).
		\end{aligned}
	\end{equation}
	Note that, for $d_{1}, d_{2} |q^n-1$, there exist integers $c_{1}$, $c_{2}$ with $0\leq c_{1},c_{2} \leq q^n-2$ such that $\chi_{d_{1}}(\epsilon)=\chi_{q^n-1}(\epsilon^{c_{1}})$ and $\chi_{d_{2}}(\epsilon)=\chi_{q^n-1}(\epsilon^{c_{2}})$. 
	Further, for $h_{1}, h_{2} |x^n-1$, there always exist $y_{1},y_{2}\in\Field_{q^{n}}$ so that $\la_{h_{1}}(\epsilon)=\widehat{\la_{0}}(y_{1}\epsilon)$ and $\la_{h_{2}}(\epsilon)=\widehat{\la_{0}}(y_{2}\epsilon)$. Hence we get
	\begin{equation}\nonumber
		\begin{aligned}
			\boldsymbol{\chi}_{f,a,b}(d_{1},d_{2},h_{1},h_{2}) =&\sum_{t,u\in\Field_{q}}^{}\la_{0}(-bt+au)\sum_{\epsilon\in\Field_{q^{n}}\smallsetminus \mathcal{Z}_{1}}^{}\chi_{q^n-1}(\epsilon^{c_{1}}f(\epsilon)^{c_{2}})\times\\&\widehat{\la_{0}}((t+y_{1})\epsilon+u\epsilon^2+y_{2}f(\epsilon))\\
			=&\sum_{t,u\in\Field_{q}}^{}\la_{0}(-bt+au)\sum_{\epsilon\in\Field_{q^{n}}\smallsetminus \mathcal{Z}_{1}}^{}\chi_{q^n-1}(R_{1}(\epsilon))\widehat{\la_{0}}(R_{2}(\epsilon)),
		\end{aligned}
	\end{equation}
	where $R_{1}(x)=x^{c_{1}}f(x)^{c_{2}}\in\Field_{q^{n}}(x)$ and $R_{2}(x)=(t+y_{1})x+ux^2+y_{2}f(x)\in\Field_{q^{n}}(x)$.

	First, we consider the situation when $R_{2}(x)={r(x)}^{p}-r(x)+\beta$ for some $r(x)\in\Field_{q^{n}}(x)$ and $\beta\in\Field_{q^n}$, where $r(x)=\frac{r_{1}(x)}{r_{2}(x)}$ with $\gcd(r_{1},r_{2})=1$. Thus we get 
	\[ (t+y_{1})x+ux^2+y_{2}\frac{f_{1}(x)}{f_{2}(x)}=\frac{{r_{1}(x)}^{p}}{{r_{2}(x)}^{p}}-\frac{r_{1}(x)}{r_{2}(x)}+\beta, \]
	that is,
	\[ f_{2}(x)({r_{1}(x)}^{p}-r_{1}(x){r_{2}(x)}^{p-1}+\beta r_{2}(x)^p)={r_{2}(x)}^{p}((t+y_{1})xf_{2}(x)+ux^2f_{2}(x)+y_{2}f_{1}(x)). \]
	As $\gcd \left({r_{1}(x)}^{p}-r_{1}(x){r_{2}(x)}^{p-1}+\beta{r_{2}(x)}^{p},{r_{2}(x)}^{p}\right)=1$, it gives that ${r_{2}(x)}^{p}|f_{2}(x)$, which is not possible until we have $r_{2}=c$, a nonzero constant. Consequently, we have 
	$${c}^{-p}f_{2}(x)({r_{1}(x)}^{p}-r_{1}(x){c}^{p-1}+\beta c^p)=(t+y_{1})xf_{2}(x)+ux^2f_{2}(x)+y_{2}f_{1}(x),$$
	which holds true only if $y_{2}=0$.  Putting it into the above equation yields, ${c}^{-p}({r_{1}(x)}^{p}\\-r_{1}(x)c^{p-1}+\beta c^p)=(t+y_{1})x+ux^2$, which is possible only if $t+y_{1}=0$, $u=0$. However, note that $\la_{h_{1}}(h_{1}(\epsilon))=1$, which further implies $\widehat{\la_{0}}(y_{1}h_{1}(\epsilon))=1$, that is, $\widehat{\la_{0}}(-th_{1}(\epsilon))=1$. Since $t\in\Field_{q}$ and $x-1$ is the $\Field_{q}$-order of $\widehat{\la_{0}}$, therefore we must have $x-1|th_{1}$, which is true only if $t=0$, and this implies $y_{1}=0$. Hence $y_{1}=y_{2}=0$, that is, $h_{1}=h_{2}=1$. In addition, if $R_{1}(x)\neq c{l(x)}^{q^n-1}$ for any $l(x)\in\Field_{q^{n}}(x)$ and $c\in\Field_{q^n}^*$, then from Lemma \ref{L2.1}, it follows that
	\begin{equation}\nonumber
		\begin{aligned}
			|\boldsymbol{\chi}_{f,a,b}(d_{1},d_{2},1,1) |\leq mq^{\frac{n}{2}+2}.
		\end{aligned}
	\end{equation}
	We now assume that $R_{1}(x)=cl(x)^{q^n-1}$ for some $c\in\Field_{q^{n}}^*$ and $l(x)\in\Field_{q^{n}}(x)$, where $l(x)=\frac{l_{1}(x)}{l_{2}(x)}$ with $\gcd(l_{1},l_{2})=1$. Following \cite{HR}, it is true only when $c_{1}=c_{2}=0$, that is, $d_{1}=d_{2}=1$. Here we have \\
	\begin{equation}\nonumber
		\begin{aligned}
			\boldsymbol{\chi}_{f,a,b}(1,1,1,1)&=\sum_{t,u\in\Field_{q}}^{}\la_{0}(-bt+au)\sum_{\epsilon\in\Field_{q^{n}}\smallsetminus \mathcal{Z}_{1}}^{} \widehat{\la_{0}}(t\epsilon+u\epsilon^2)\\&=\sum_{t,u\in\Field_{q}}^{}\la_{0}(-bt+au)\sum_{\epsilon\in\Field_{q^{n}}}^{} \widehat{\la_{0}}(t\epsilon+u\epsilon^2)-\mathcal{H},
		\end{aligned}
	\end{equation} 
	where $\mathcal{H}=\sum_{t,u\in\Field_{q}}^{}\la_{0}(-bt+au)\sum_{\epsilon\in \mathcal{Z}_{1}}^{} \widehat{\la_{0}}(t\epsilon+u\epsilon^2)$. Clearly, $|\mathcal{H}|\leq |\mathcal{Z}_{1}|q^2$. Moreover, we write 
	\begin{equation}\nonumber
		\begin{aligned}
			\boldsymbol{\chi}_{f,a,b}(1,1,1,1)=T_{0}+T_{1}+T_{2}+T_{3}-\mathcal{H},
		\end{aligned}
	\end{equation}
	where $T_{0}$ corresponds to the term $t=u=0$, $T_{1}$ to the term $t=0,u\neq 0$, $T_{2}$ to the term $t\neq 0, u=0$ and $T_{3}$ to the term $t,u\neq 0$. Since $q$ is odd prime power, we use Lemma \ref{L2.6} to estimate $T_i'$ s. In particular:
	
	$T_{1}=\sum_{u\in\Field_{q}^*}^{}\la_{0}(au)\sum_{\epsilon\in\Field_{q^{n}}}^{} \widehat{\la_{0}}(u\epsilon^2)\implies|T_{1}|\leq (q-1)q^{n/2}$.
	
	$T_{2}=\sum_{t\in\Field_{q}^*}^{}\la_{0}(-bt)\sum_{\epsilon\in\Field_{q^{n}}}^{} \widehat{\la_{0}}(t\epsilon)\implies T_{2}=0$.
	
	$T_{3}=\sum_{t,u\in\Field_{q}^*}^{}\la_{0}(-bt+au)\sum_{\epsilon\in\Field_{q^{n}}}^{} \widehat{\la_{0}}(t\epsilon+u\epsilon^2)\implies |T_{3}|\leq (q-1)^2q^{n/2}$, \\
	while clearly, $T_{0}=q^n$.\\
	Combining all the above, we obtain that
	\begin{equation}\lb{E100}
		|\boldsymbol{\chi}_{f,a,b}(1,1,1,1)-q^n|\leq q(q-1)q^{n/2}+ |\mathcal{Z}_{1}|q^2.
	\end{equation}
	
	Finally, assume that $R_{2}(x)\neq h(x)^{p}-h(x)+c$ for any $h(x)\in\Field_{q^{n}}(x)$ and $c\in\Field_{q^n}$. Then, by using Lemma \ref{L2.2}, we get that 
	$$|\boldsymbol{\chi}_{f,a,b}(d_{1},d_{2},h_{1},h_{2}) |\leq 2mq^{\frac{n}{2}+2},$$


Thus, if $(d_{1},d_{1},h_{1},h_{2})\neq (1,1,1,1)$ then based on the above discussions we get that
$$|\boldsymbol{\chi}_{f,a,b}(d_{1},d_{2},h_{1},h_{2}) |\leq 2mq^{\frac{n}{2}+2},$$
where the third and fourth ``$1$'' represent the identity in $\Field_{q}[x]$. 
Therefore, from the above discussions together with (\ref{E4}), it follows that
\begin{equation}\lb{E5}
	\begin{aligned}
		&\mathfrak{C}_{f,a,b}(l_{1},l_{2},L_{g_1},g_{2})\\&\geq \mathcal{G}\{q^n-|\mathcal{Z}_{1}|q^2-q(q-1)q^{n/2}- 2mq^{\frac{n}{2}+2}(W(l_{1})W(l_{2})W(L_{g_1})W(g_{2})-1)\}\\
		&>\mathcal{G}\{q^n- 2mq^{\frac{n}{2}+2}W(l_{1})W(l_{2})W(L_{g_1})W(g_{2})\}.
	\end{aligned}
\end{equation}
Hence $\mathfrak{C}_{f,a,b}(l_{1},l_{2},L_{g_1},g_{2})>0$, if we have $q^{\frac{n}{2}-2}> 2mW(l_{1})W(l_{2})W(L_{g_1})W(g_{2})$. Consequently, we have $(q,n)\in\mathcal{D}_{m}$ by letting $l_{1}=l_{2}=q^n-1$ and $g_{1}=g_{2}=x^n-1$, that is, provided
\begin{equation}\nonumber
	q^{\frac{n}{2}-2}>
	2m{W(q^n-1)}^2W(L)W(x^n-1).
\end{equation}
\ep
\section{Sieve Results}\lb{S4}
Here, we state some results, their proofs have been omitted as they follow on the lines of the
results in \cite{ARS} and have been used frequently in \cite{ARS, HR, SHR2021,   RAA}.
\begin{lem}\lb{L4.1}
	 Let $k,P$ be positive integers and $g,g',G,G'\in\Field_{q}[x]$ be polynomials over $\Field_{q}$  such that gcd$(k,P)=1$ and gcd$(g,G)=$gcd$(g',G')=1$. Let $\{p_1,p_2,\ldots,p_r\}$ be the set of prime factors of P, $\{g_1,g_2,\ldots,g_s\}$ be the set of irreducible factors of $G$ and $\{g_1',g_2',\ldots,g_t'\}$ be the set of irreducible factors of $G'$. Then
	\begin{equation}\lb{E6}
		\begin{aligned}
			\mathfrak{C}_{f,a,b}&(kP,kP,gG,g'G')\geq \sum_{i=1}^{r}\mathfrak{C}_{f,a,b}(p_{i}k,k,g,g')+\sum_{i=1}^{r}	\mathfrak{C}_{f,a,b}(k,p_{i}k,g,g')\\&+\sum_{j=1}^{s}\mathfrak{C}_{f,a,b}(k,k,g_{j}g,g')
			+\sum_{j=1}^{t}\mathfrak{C}_{f,a,b}(k,k,g,g_{j}g')-(2r+s+t-1)\mathfrak{C}_{f,a,b}(k,k,g,g').\\
		\end{aligned}
	\end{equation}
\end{lem}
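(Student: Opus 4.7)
The plan is a standard Bonferroni-type sieve, in the spirit of the sieve lemmas of \cite{ARS}. First I would set $\mathcal{A}$ to be the subset of $\Field_{q^n}^*$ counted by $\mathfrak{C}_{f,a,b}(k,k,g,g')$: those $\epsilon$ such that $\epsilon$ is $k$-free and $g$-free, $f(\epsilon)$ is $k$-free and $g'$-free, and the prescribed trace conditions hold. Because $\gcd(k,P) = 1$ and $\gcd(g,G) = \gcd(g',G') = 1$, both multiplicative $l$-freeness and $\Field_q[x]$-module $g$-freeness decompose along coprime factorisations: by the very definitions in (\ref{E1}) and (\ref{E2}), $\epsilon$ is $kP$-free iff $\epsilon$ is $k$-free and $p_i$-free for every $i$, and $\epsilon$ is $gG$-free iff $\epsilon$ is $g$-free and $g_j$-free for every $j$, with the analogous statements for $f(\epsilon)$.

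Next I would define, inside $\mathcal{A}$, the ``bad'' subsets $B_i$ for $i = 1, \ldots, r$, $B_i'$ for $i = 1, \ldots, r$, $C_j$ for $j = 1, \ldots, s$, and $C_j'$ for $j = 1, \ldots, t$ consisting of those $\epsilon \in \mathcal{A}$ for which, respectively, $\epsilon$ fails to be $p_i$-free, $f(\epsilon)$ fails to be $p_i$-free, $\epsilon$ fails to be $g_j$-free, or $f(\epsilon)$ fails to be $g_j'$-free. By the decomposition just noted, the set counted by $\mathfrak{C}_{f,a,b}(kP, kP, gG, g'G')$ is exactly the complement in $\mathcal{A}$ of the union of all these bad subsets, and each one satisfies $|\mathcal{A}| - |B_i| = \mathfrak{C}_{f,a,b}(p_i k, k, g, g')$, together with the obvious analogues $|\mathcal{A}| - |B_i'| = \mathfrak{C}_{f,a,b}(k, p_i k, g, g')$, $|\mathcal{A}| - |C_j| = \mathfrak{C}_{f,a,b}(k, k, g_j g, g')$, and $|\mathcal{A}| - |C_j'| = \mathfrak{C}_{f,a,b}(k, k, g, g_j' g')$.

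The final step is to invoke the elementary inequality $|\mathcal{A} \setminus \bigcup_\ell X_\ell| \geq |\mathcal{A}| - \sum_\ell |X_\ell|$ over the $2r + s + t$ bad subsets. Substituting $|X_\ell| = \mathfrak{C}_{f,a,b}(k,k,g,g') - \mathfrak{C}_{f,a,b}(\cdots)$ for each term and gathering the copies of $\mathfrak{C}_{f,a,b}(k,k,g,g')$ produces precisely (\ref{E6}): the net coefficient of $\mathfrak{C}_{f,a,b}(k,k,g,g')$ is $1 - (2r + s + t) = -(2r + s + t - 1)$, and every ``shifted'' $\mathfrak{C}$ appears with coefficient $+1$. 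I do not foresee any real obstacle here, as the argument is purely combinatorial and independent of the character-sum machinery of Section \ref{S3}; the only point needing a line of justification is the preservation of $l$- and $g$-freeness under coprime factorisations, which is immediate from the M\"obius-inversion form of $\rho_l$ and $\kappa_g$.
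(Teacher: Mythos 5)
Your proof is correct and is essentially the argument the paper relies on: the paper omits the proof of Lemma \ref{L4.1}, stating that it follows the sieve lemmas of \cite{ARS}, and those are proved by exactly this union-bound (Bonferroni) argument over the ``bad'' sets of elements failing $p_i$- or $g_j$-freeness, using the coprime decomposition of freeness. Your accounting of the $2r+s+t$ bad sets and the resulting coefficient $-(2r+s+t-1)$ matches the stated inequality (\ref{E6}) precisely.
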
 
\begin{lem}\lb{L4.2}
	Let $l,n,q\in\mathbb{N}$, $g,g'\in\Field_{q}[x]$ be such that $q$ is an odd prime power, $n\geq 5$, $l|q^n-1$, $g'|x^n-1$ and $g|L$. Let $P$ be a prime number such that $P|q^n-1$ as well as $P\nmid l$, let $I'$ be an irreducible polynomial such that $I'|x^n-1$ as well as $I'\nmid g'$, and let $I$ be an irreducible polynomial such that $I|x^n-1$ as well as $I\nmid g$. Then we get the following bounds:
	\begin{equation}\nonumber
		\begin{aligned}
			|\mathfrak{C}_{f,a,b}(Pl,l,g,g')-\theta(P)\mathfrak{C}_{f,a,b}(l,l,g,g')|&\leq 2m\theta(P){\theta(l)}^2\Theta(g)\Theta(g'){W(l)}^2W(g)W(g')q^{n/2},\\
			|\mathfrak{C}_{f,a,b}(l,Pl,g,g')-\theta(P)\mathfrak{C}_{f,a,b}(l,l,g,g')|&\leq 2m\theta(P){\theta(l)}^2\Theta(g)\Theta(g'){W(l)}^2W(g)W(g')q^{n/2},\\
			|\mathfrak{C}_{f,a,b}(l,l,Ig,g')-\Theta(I)\mathfrak{C}_{f,a,b}(l,l,g,g')|&\leq 2m\Theta(I){\theta(l)}^2\Theta(g)\Theta(g'){W(l)}^2W(g)W(g')q^{n/2},\\
			|\mathfrak{C}_{f,a,b}(l,l,g,I'g)-\Theta(I')\mathfrak{C}_{f,a,b}(l,l,g,g)|&\leq 2m\Theta(I'){\theta(l)}^2\Theta(g)\Theta(g'){W(l)}^2W(g)W(g')q^{n/2}.\\
		\end{aligned}
	\end{equation} 
\end{lem}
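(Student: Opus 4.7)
The plan is to prove the four estimates of Lemma~\ref{L4.2} by a common template that runs in parallel with the proof of Theorem~\ref{T3.1}. I describe the strategy for the first bound; the second is entirely symmetric (switching the roles of the two copies of $l$), and the third and fourth are the polynomial analogues obtained by replacing $\rho$ with $\kappa$.

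The core step is an identity that isolates the ``new'' contribution when passing from $l$-free to $Pl$-free. Since $\gcd(P,l)=1$, every divisor of $Pl$ has the form $d$ or $Pd$ with $d\mid l$, so splitting the Möbius sum defining $\rho_{Pl}$ along this partition and using $\theta(Pl)=\theta(P)\theta(l)$ yields
$$\rho_{Pl}(\epsilon) - \theta(P)\rho_l(\epsilon) = \theta(Pl)\sum_{d\mid l}\frac{\mu(Pd)}{\phi(Pd)}\sum_{\chi_{Pd}}\chi_{Pd}(\epsilon).$$
The crucial feature is that every multiplicative character $\chi_{Pd}$ on the right-hand side has order divisible by $P>1$, hence is non-trivial.

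Substituting this identity into the definition of $\mathfrak{C}_{f,a,b}$ and expanding exactly as in \eqref{E4}, the difference $\mathfrak{C}_{f,a,b}(Pl,l,g,g')-\theta(P)\mathfrak{C}_{f,a,b}(l,l,g,g')$ rewrites as a weighted sum of the same inner character sums $\boldsymbol{\chi}_{f,a,b}(Pd_1,d_2,h_1,h_2)$ that were studied in the proof of Theorem~\ref{T3.1}, the only change being that $\chi_{Pd_1}$ is always non-trivial. This non-triviality rules out the exceptional main-term case $(d_1,d_2,h_1,h_2)=(1,1,1,1)$, so the uniform Weil-type bound $|\boldsymbol{\chi}_{f,a,b}|\le 2mq^{n/2+2}$ coming from Lemmas~\ref{L2.1} and~\ref{L2.2} applies to every surviving term.

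Counting characters and Möbius factors as in Theorem~\ref{T3.1}---the $\phi(Pd_1)$ characters of order $Pd_1$ cancel the denominator, $\sum_{d_1\mid l}|\mu(Pd_1)| = W(l)$, and analogous sums $W(l)$, $W(g)$, $W(g')$ arise from the other three parameters---and multiplying by the external factor $\theta(Pl)\theta(l)\Theta(g)\Theta(g')/q^2 = \theta(P)\theta(l)^2\Theta(g)\Theta(g')/q^2$ gives precisely
$$2m\,\theta(P)\theta(l)^2\Theta(g)\Theta(g')\,W(l)^2 W(g) W(g')\,q^{n/2}.$$
For the third and fourth estimates I would use the completely analogous additive identity
$$\kappa_{Ig}(\epsilon) - \Theta(I)\kappa_g(\epsilon) = \Theta(Ig)\sum_{h\mid g}\frac{\mu_q(Ih)}{\Phi_q(Ih)}\sum_{\la_{Ih}}\la_{Ih}(\epsilon),$$
which is valid because $I\nmid g$ and $I$ is irreducible, so $\gcd(I,g)=1$. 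The only genuine obstacle is bookkeeping: one must verify that the exceptional cases from Theorem~\ref{T3.1} (where $R_1$ is a $(q^n-1)$-th power or $R_2$ has the form $h^p-h+\beta$) are automatically excluded, but this is immediate from the non-triviality of the extra character ($\chi_{Pd_1}$ in the multiplicative case, $\la_{Ih_1}$ or $\la_{Ih_2}$ in the additive cases) introduced by the decomposition.
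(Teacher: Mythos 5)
Your strategy is the intended one: the paper omits the proof of Lemma~\ref{L4.2} entirely, deferring to \cite{ARS}, and your decomposition is exactly that standard argument. The identity $\rho_{Pl}(\epsilon)-\theta(P)\rho_{l}(\epsilon)=\theta(Pl)\sum_{d\mid l}\frac{\mu(Pd)}{\phi(Pd)}\sum_{\chi_{Pd}}\chi_{Pd}(\epsilon)$ is correct, the expansion into the sums $\boldsymbol{\chi}_{f,a,b}$ is the right move, and your bookkeeping (the $\phi(Pd_1)$ characters cancelling the denominator, the four factors $W(l)^2W(g)W(g')$, and the prefactor $\theta(Pl)\theta(l)\Theta(g)\Theta(g')/q^2=\theta(P)\theta(l)^2\Theta(g)\Theta(g')/q^2$) reproduces the stated constant exactly. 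For the first, second and fourth inequalities your justification is sound: non-triviality of $\chi_{Pd_1}$, $\chi_{Pd_2}$ or $\la_{I'h_2}$ rules out the tuple $(1,1,1,1)$, and (for the fourth) the exceptional-$R_2$ analysis of Theorem~\ref{T3.1} forces $y_2=0$, i.e.\ a trivial fourth index, without any further condition.

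The gap is in the third inequality, where your closing claim---that excluding the exceptional cases ``is immediate from the non-triviality of the extra character $\la_{Ih_1}$''---is not true. In Theorem~\ref{T3.1} the exceptional case $R_2=r^p-r+\beta$ forces $y_2=0$, $u=0$ and $t+y_1=0$; the contradiction is then obtained \emph{not} from non-triviality of $\la_{h_1}$ but from the fact that its $\Field_q$-order $h_1$ is coprime to $x-1$ (the paper's step ``$x-1\mid th_1$, which is true only if $t=0$''), equivalently that $y_1\notin\Field_q$. This is precisely why the third slot of $\mathfrak{C}_{f,a,b}$ is restricted to divisors of $L$. Concretely, if $I=x-1$ (which the hypotheses $I\mid x^n-1$, $I\nmid g$ literally allow, since $g\mid L$ is automatically coprime to $x-1$), then the character indexed by $Ih_1=x-1$ is non-trivial and satisfies $\la_{x-1}(\epsilon)=\widehat{\la_0}(y_1\epsilon)$ with $y_1\in\Field_q^*$; the tuple $(1,1,x-1,1)$ then contributes, at $t=-y_1\neq 0$, $u=0$, an inner sum equal to $q^n-|\mathcal{Z}_1|$, a full main term, so the uniform bound $2mq^{n/2+2}$ fails for it and the third inequality itself would be false. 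To repair this you must require $I\mid L$ (so $I\neq x-1$, hence $Ig\mid L$ and every order $Ih_1$ remains coprime to $x-1$, putting all tuples back inside the scope of Theorem~\ref{T3.1}'s case analysis); this is the intended reading, since in Theorem~\ref{T4.3} the corresponding irreducibles $g_1,\ldots,g_s$ are taken among the factors of $L$.
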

\begin{thm}[Prime Sieve]\lb{T4.3}
	Let $l,n,q\in\mathbb{N}$, $g\in\Field_{q}[x]$ be such that $q$ is an odd prime power, $n\geq 5$, $l|q^n-1$, $g|L$, and $g'|x^n-1$. Let $p_{1},p_{2},\ldots,p_{r}$ be the distinct primes dividing $q^n-1$ but not $l$, let $g_{1},g_{2},\ldots,g_{s}$ be the distinct irreducible factors of $L$ but not $g$, and let $g_{1}',g_{2}',\ldots,g_{t}'$ be the distinct irreducible factors of $(x^n-1)$ but not $g'$. Abbreviate $\mathfrak{C}_{f,a,b}(q^n-1,q^n-1,L,x^n-1)$ to $\mathfrak{C}_{f,a,b}$. Let us define
	\begin{equation}\nonumber
		\mathcal{S}:=1-2\sum_{i=1}^{r}\frac{1}{p_{i}}-\sum_{j=1}^{s}\frac{1}{q^{deg(g_{j})}}-\sum_{j=1}^{t}\frac{1}{q^{deg(g_{j}')}}, \mathcal{S}>0
	\end{equation} 
	and 
	\begin{equation}\nonumber
		\mathcal{M}:=\frac{2r+s+t-1}{\mathcal{S}}+2.
	\end{equation}
	Then $\mathfrak{C}_{f,a,b}>0$ if we have 
	\begin{equation}\lb{E7}
		q^{\frac{n}{2}-2}>
		2m{W(l)}^2W(g)W(g')\mathcal{M}.
	\end{equation}
\end{thm}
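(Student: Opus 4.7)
My plan is to combine Lemma \ref{L4.1} (which expands $\mathfrak{C}_{f,a,b}(lP, lP, gG, g'G')$ into a sum of single-factor counts) with Lemma \ref{L4.2} (which compares each such count with a common base count $\mathfrak{C}_0 := \mathfrak{C}_{f,a,b}(l,l,g,g')$ up to a controlled error), and then to insert the base-count bound from the proof of Theorem \ref{T3.1} at the end. The key observation driving the definition of $\mathcal{M}$ is a telescoping identity that collapses all error contributions into a single factor.

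First I would reduce to radicals. Since $\epsilon$ is $l$-free iff it is $\mathrm{rad}(l)$-free (and analogously in the polynomial setting), writing $P = \prod_{i=1}^{r} p_i$, $G = \prod_{j=1}^{s} g_j$, $G' = \prod_{j=1}^{t} g_j'$ gives $\mathfrak{C}_{f,a,b} = \mathfrak{C}_{f,a,b}(lP, lP, gG, g'G')$. Applying Lemma \ref{L4.1} with $k = l$ then expresses this as a sum of four families of terms, $\mathfrak{C}_{f,a,b}(p_i l, l, g, g')$, $\mathfrak{C}_{f,a,b}(l, p_i l, g, g')$, $\mathfrak{C}_{f,a,b}(l, l, g_j g, g')$, $\mathfrak{C}_{f,a,b}(l, l, g, g_j' g')$, corrected by $-(2r+s+t-1)\mathfrak{C}_0$.

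The middle step is to apply each of the four inequalities of Lemma \ref{L4.2} with the common error
\begin{equation*}
E := 2m\,\theta(l)^2\Theta(g)\Theta(g')\,W(l)^2\,W(g)\,W(g')\,q^{n/2},
\end{equation*}
yielding lower bounds of the form $\theta(p_i)\mathfrak{C}_0 - \theta(p_i)E$ and $\Theta(g_j)\mathfrak{C}_0 - \Theta(g_j)E$, and similarly for the other two families. Summing and using $\theta(p) = 1 - 1/p$ and $\Theta(I) = 1 - q^{-\deg(I)}$, the coefficient of $\mathfrak{C}_0$ collapses exactly to $\mathcal{S}$, while the coefficient of $-E$ becomes $(2r+s+t-1)+\mathcal{S}$. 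A short manipulation using $\mathcal{M} = (2r+s+t-1)/\mathcal{S} + 2$ then gives
\begin{equation*}
\mathfrak{C}_{f,a,b} \geq \mathcal{S}\bigl(\mathfrak{C}_0 - (\mathcal{M}-1)E\bigr).
\end{equation*}

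Finally, I would insert the bound $\mathfrak{C}_0 \geq \theta(l)^2\Theta(g)\Theta(g')\bigl(q^{n-2} - 2m q^{n/2} W(l)^2 W(g) W(g')\bigr)$, which is the penultimate inequality in the proof of Theorem \ref{T3.1} applied with $l_1 = l_2 = l$ and polynomial divisors $g, g'$ (permitted since $g \mid L$). The two correction terms then combine into $-2m\mathcal{M}\,q^{n/2}W(l)^2W(g)W(g')$, so positivity of the resulting bracket is precisely the hypothesis (\ref{E7}), since $\mathcal{S} > 0$. The main obstacle is just the bookkeeping in the middle step: one has to verify the identity
\begin{equation*}
2\sum_{i=1}^{r}\theta(p_i) + \sum_{j=1}^{s}\Theta(g_j) + \sum_{j=1}^{t}\Theta(g_j') - (2r+s+t-1) = \mathcal{S}
\end{equation*}
and confirm that the surviving factor $1 + (2r+s+t-1)/\mathcal{S}$ equals $\mathcal{M}-1$; once this is in place, no further analytic input beyond the Weil-type bounds already encoded in Lemmas \ref{L4.1} and \ref{L4.2} is required.
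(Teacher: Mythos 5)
Your proposal is correct and follows essentially the same route as the paper: the paper omits a standalone proof (citing the standard sieve arguments of Gupta--Sharma--Cohen) and instead obtains Theorem \ref{T4.3} as the special case $v=w=u=\gamma_1=\gamma_2=\gamma_3=0$ of Theorem \ref{T5.1}, whose proof derives exactly your inequality $\mathfrak{C}_{f,a,b}\geq \theta(l)^2\Theta(g)\Theta(g')[\mathcal{S}q^{n-2}-2mq^{n/2}W(l)^2W(g)W(g')(2r+s+t+2\mathcal{S}-1)]$ by combining Lemma \ref{L4.1}, Lemma \ref{L4.2} and the bound (\ref{E5}), just as you do. Your bookkeeping identities (the coefficient of $\mathfrak{C}_0$ collapsing to $\mathcal{S}$ and $1+(2r+s+t-1)/\mathcal{S}=\mathcal{M}-1$) check out and reproduce the paper's condition (\ref{E7}) exactly.
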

 We now deduce a modified sieving version, which has more impact compared to Theorem \ref{T4.3}, that was initially proposed by Cohen and Gupta in \cite{SA}. To progress further we require some notations and conventions which we adopt from \cite{SA}.  For $m$, a positive integer (or a polynomial over $\Field_{q}$), $\mathrm{Rad}(m)$ represents the product of distinct prime divisors (irreducible factors) of $m$. We write $\mathrm{Rad}(q^n-1)$ as the product form as $kPT$. In this representation, $k$ denotes the product of some smallest prime divisors of $q^n-1$, $T$ denotes the product of some largest prime divisors of $q^n-1$, and expressed as $T = t_{1}t_{2}\ldots t_{v}$. The remaining prime divisors are multiplied together to form $P$, represented as $P = p_{1}p_{2}\ldots p_{r}$. In a similar manner, we write $\mathrm{Rad}(x^n-1)=g'G'H'$ and $\mathrm{Rad}(L)=gGH$, where $g$ (or $g'$) is the product of certain irreducible factors of $L$ (or $x^n-1$), specifically those of minimal degree. Those irreducible factors with higher degrees are multiplied in order to the formation of $H$ (or $H'$), where $H$ (or $H'$) is expressed as the product $h_{1}h_{2}\ldots h_{w}$ (or $h_{1}'h_{2}'\ldots h_{u}'$), while the remaining factors form $G$ (or $G'$), denoted as $g_{1}g_{2}\ldots g_{s}$ (or $g_{1}'g_{2}'\ldots g_{t}'$)
\begin{thm}[Modified Prime Sieve]\lb{T5.1}
	Let $p,q,k,n,m\in\mathbb{N}$ be such that $p$ is an odd prime, $q=p^k$ and $n\geq5$. Following the above notations, let $\mathrm{Rad}(q^n-1)=kPT$, $\mathrm{Rad}(L)=gGH$, and $\mathrm{Rad}(x^n-1)=g'G'H'$. Also, let $\mathcal{S}=1-2\sum_{i=1}^{r}\frac{1}{p_{i}}-\sum_{j=1}^{s}\frac{1}{q^{deg(g_{j})}}-\sum_{j=1}^{t}\frac{1}{q^{deg(g_{j}')}}$, $\gamma_{1}=\sum_{i=1}^{v}\frac{1}{l_{i}}$, $\gamma_{2}=\sum_{j=1}^{u}\frac{1}{q^{deg(h_j')}}$,  $\gamma_{3}=\sum_{j=1}^{w}\frac{1}{q^{deg(h_{j})}}$ and $\mathcal{S}{\theta(k)}^2{\Theta(g)}\Theta(g')-(2\gamma_{1}+\gamma_{2}+\gamma_{3})>0$. Then
	\begin{equation}\nonumber
		\begin{aligned}
			q^{n/2-2}&>2m\Bigg[\Bigg({\theta(k)}^2\Theta(g)\Theta(g'){W(k)}^2W(g)W(g')(2r+s+t+2\mathcal{S}-1)+\{((v-\gamma_{1}\\&+2u-2\gamma_{2})/2)+(3v-\gamma_{1}+w+u)/2m\}+\{(m+1)(v+\gamma_{1}+w+\gamma_{2})+(u-\gamma_{2})\}\\&/(2mq^{n/2})\Bigg)\Bigg/(\mathcal{S}{\theta(k)}^2\Theta(g)\theta(g')-(2\gamma_{1}+\gamma_{2}+\gamma_{3}))\Bigg]
		\end{aligned}
	\end{equation}
	implies that $(q,n)\in\mathcal{D}_{m}$.
\end{thm}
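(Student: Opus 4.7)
The plan is to follow the Cohen--Gupta modified sieve of \cite{SA}, blending Lemma \ref{L4.1} with Lemma \ref{L4.2}. Lemma \ref{L4.1} performs an inclusion--exclusion over the ``middle'' primes $p_1,\ldots,p_r$ of $q^n-1$ and the ``middle'' irreducible factors $g_1,\ldots,g_s$ of $L$ and $g_1',\ldots,g_t'$ of $x^n-1$; Lemma \ref{L4.2} is then used to peel off, one at a time, the ``large'' primes $t_1,\ldots,t_v$ and large irreducible factors $h_1,\ldots,h_w,\,h_1',\ldots,h_u'$, each step contributing only a controlled Weil--type error.

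First I would use that $l$-freeness depends only on the radical of $l$, and similarly for the polynomial arguments, so $\mathfrak{C}_{f,a,b}$ can equivalently be studied with arguments $(kPT,kPT,gGH,g'G'H')$. Then I apply Lemma \ref{L4.1} to this quantity with ``base'' $(kT,kT,gH,g'H')$ and ``extras'' $P, G, G'$, obtaining
\begin{equation*}
\mathfrak{C}_{f,a,b}\;\geq\;\sum_{i=1}^{r}\!\bigl[\mathfrak{C}_{f,a,b}(p_ikT,kT,gH,g'H')+\mathfrak{C}_{f,a,b}(kT,p_ikT,gH,g'H')\bigr]+\ldots\;-(2r+s+t-1)\mathfrak{C}_{f,a,b}(kT,kT,gH,g'H').
\end{equation*}

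Next, I iteratively invoke Lemma \ref{L4.2} to strip $t_1,\ldots,t_v$ from the first two arguments and $h_1,\ldots,h_w$, $h_1',\ldots,h_u'$ from the third and fourth. Each removal of $t_i$ produces a factor $\theta(t_i)$ on the main term and an additive error bounded by the stated Weil-type quantity; analogously for $h_i$ (factor $\Theta(h_i)$) and $h_i'$ (factor $\Theta(h_i')$). Telescoping gives cumulative main factor $\theta(T)^2\Theta(H)\Theta(H')$ and cumulative error governed by $\gamma_1=\sum 1/t_i$, $\gamma_2=\sum 1/q^{\deg h_i'}$, $\gamma_3=\sum 1/q^{\deg h_i}$. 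Inserting the resulting estimates into the inclusion--exclusion from the previous step and regrouping yields a lower bound of the form
\begin{equation*}
\mathfrak{C}_{f,a,b}\;\geq\;\bigl(\mathcal{S}\,\theta(k)^2\Theta(g)\Theta(g')-(2\gamma_1+\gamma_2+\gamma_3)\bigr)\,\mathfrak{C}_{f,a,b}(k,k,g,g')\;-\;\mathcal{E},
\end{equation*}
where $\mathcal{E}$ is a sum of explicit error terms whose magnitude, after invoking the inequality \eqref{E5} on $\mathfrak{C}_{f,a,b}(k,k,g,g')$, matches the bracketed expression in the claimed bound.

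Finally, under the hypothesis $\mathcal{S}\theta(k)^2\Theta(g)\Theta(g')-(2\gamma_1+\gamma_2+\gamma_3)>0$, the leading coefficient is positive, so one rearranges to solve for $q^{n/2-2}$ and obtains exactly the stated sufficient condition for $\mathfrak{C}_{f,a,b}>0$; this puts $(q,n)\in\mathcal{D}_{m}$. The main obstacle I anticipate is the bookkeeping of error terms from the iterated applications of Lemma \ref{L4.2}: in particular, organising them so that $v,u,w$ and $\gamma_1,\gamma_2,\gamma_3$ appear with exactly the coefficients in the three bracketed summands of the statement, and verifying that the telescoped product of $\theta$'s and $\Theta$'s interacts correctly with the denominator $\mathcal{S}\,\theta(k)^2\Theta(g)\Theta(g')-(2\gamma_1+\gamma_2+\gamma_3)$ — this is the step where carelessness would most easily give a weaker, but superficially similar, bound.
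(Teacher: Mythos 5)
Your plan correctly identifies the two ingredients (Lemma \ref{L4.1} and Lemma \ref{L4.2}) and the Cohen--Gupta philosophy, but the way you combine them has a genuine gap, and it is not how the paper proceeds. You apply Lemma \ref{L4.1} with base $(kT,kT,gH,g'H')$ and then propose to ``iteratively invoke Lemma \ref{L4.2}'' to strip $t_1,\dots,t_v$ and the $h_j,h_j'$ out of the resulting counts. Lemma \ref{L4.2} cannot do this: each of its four inequalities compares a count having \emph{one} extra factor in \emph{one} coordinate, e.g.\ $\mathfrak{C}_{f,a,b}(Pl,l,g,g')$, against the diagonal base $\mathfrak{C}_{f,a,b}(l,l,g,g')$; it offers no way to remove a prime $t_i$ that sits in \emph{both} of the first two arguments, as it does in every term $\mathfrak{C}_{f,a,b}(p_ikT,kT,gH,g'H')$ and in $\mathfrak{C}_{f,a,b}(kT,kT,gH,g'H')$ produced by your first step. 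So the claimed telescoping down to $\theta(T)^2\Theta(H)\Theta(H')\,\mathfrak{C}_{f,a,b}(k,k,g,g')$ is unsupported. Moreover, even where Lemma \ref{L4.2} does apply to your terms (comparing $\mathfrak{C}_{f,a,b}(p_ikT,kT,gH,g'H')$ with $\theta(p_i)\mathfrak{C}_{f,a,b}(kT,kT,gH,g'H')$), its error is proportional to $W(kT)^2W(gH)W(g'H')=W(k)^2W(g)W(g')\,4^v2^w2^u$, i.e.\ exponential in $v,w,u$ --- precisely the blow-up that the modified sieve is designed to eliminate. Thus your route either stalls or reinstates the bad $W$-factors.

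The paper instead splits the large factors off \emph{additively, before any sieving}, via the inclusion--exclusion inequality (\ref{E8}): $\mathfrak{C}_{f,a,b}\geq\mathfrak{C}_{f,a,b}(kP,kP,gG,g'G')+\mathfrak{C}_{f,a,b}(T,T,H,H')-\mathfrak{C}_{f,a,b}(1,1,1,1)$. The first summand is handled by the ordinary prime sieve (Lemma \ref{L4.1}, Lemma \ref{L4.2} with base $(k,k,g,g')$, and (\ref{E5})), giving (\ref{E9}). The second is expanded by Lemma \ref{L4.1} with base $(1,1,1,1)$ into single-factor counts $\mathfrak{C}_{f,a,b}(t_i,1,1,1)$, $\mathfrak{C}_{f,a,b}(1,t_i,1,1)$, $\mathfrak{C}_{f,a,b}(1,1,h_j,1)$, $\mathfrak{C}_{f,a,b}(1,1,1,h_j')$, and these are \emph{not} estimated by Lemma \ref{L4.2} but by fresh character-sum bounds derived directly from Lemma \ref{L2.2}, namely $\theta(t_i)(2q^{n/2}+m)$, $\theta(t_i)((m+1)q^{n/2}+1)$, $\Theta(h_j)(q^{n/2}+m+1)$, $\Theta(h_j')((2m+1)q^{n/2}+1)$, together with $|\mathfrak{C}_{f,a,b}(1,1,1,1)|\leq q^{n-2}+q^{n/2}+|\mathcal{Z}_1|$ from (\ref{E100}). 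These coefficient-specific estimates are exactly what generate the terms $m(v-\gamma_1+2u-2\gamma_2)+(3v-\gamma_1+w+u)$, $(m+1)(v+\gamma_1+w+\gamma_2)+(u-\gamma_2)$ and $(2\gamma_1+\gamma_2+\gamma_3)q^{n-2}$ in the statement; the uniform $2m\,W$-type bound of Lemma \ref{L4.2}, on which you rely throughout, cannot reproduce them. A symptom of the same problem: in your final display $(2\gamma_1+\gamma_2+\gamma_3)$ multiplies $\mathfrak{C}_{f,a,b}(k,k,g,g')$, whereas in the correct derivation it multiplies the bound on $\mathfrak{C}_{f,a,b}(1,1,1,1)\approx q^{n-2}$; this discrepancy (a factor $\theta(k)^2\Theta(g)\Theta(g')$) would change the denominator $\mathcal{S}\theta(k)^2\Theta(g)\Theta(g')-(2\gamma_1+\gamma_2+\gamma_3)$ appearing in the theorem.
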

\bp
Clearly, we have
\begin{equation}\lb{E8}
	\begin{aligned}
		\mathfrak{C}_{f,a,b}(q^n-1,&q^n-1,L,x^n-1)=\mathfrak{C}_{f,a,b}(kPT,kPT,gGH,g'G'H')\\&\geq \mathfrak{C}_{f,a,b}(kP,kP,gG,g'G')+\mathfrak{C}_{f,a,b}(L,L,H,H')-\mathfrak{C}_{f,a,b}(1,1,1,1).
	\end{aligned}
\end{equation}
By Lemma \ref{L4.1},
\begin{equation}\nonumber
	\begin{aligned}
		\mathfrak{C}_{f,a,b}(kP,kP&,gH,g'H')\\&\geq \sum_{i=1}^{r}\{\mathfrak{C}_{f,a,b}(p_{i}k,k,g,g')-\theta(p_{i})\mathfrak{C}_{f,a,b}(k,k,g,g')\}\\&+ \sum_{i=1}^{r}\{\mathfrak{C}_{f,a,b}(k,p_{i}k,g,g')-\theta(p_{i})\mathfrak{C}_{f,a,b}(k,k,g,g')\}\\&+ \sum_{j=1}^{s}\{\mathfrak{C}_{f,a,b}(k,k,g_{j}g,g')-\Theta(g_{j})\mathfrak{C}_{f,a,b}(k,k,g,g')\}\\&+ \sum_{j=1}^{t}\{\mathfrak{C}_{f,a,b}(k,k,g_{j}g,g')-\Theta(g_{j}')\mathfrak{C}_{f,a,b}(k,k,g,g')\}+\mathcal{S}\mathfrak{C}_{f,a,b}(k,k,g,g').
	\end{aligned}
\end{equation}
By using Equation $(\ref{E5})$ and Lemma \ref{L4.2}, the above inequality turns into
\begin{equation}\lb{E9}
	\begin{aligned}
		\mathfrak{C}_{f,a,b}&(kP,kP,gH,g'H')\geq\mathcal{S}{\theta(k)}^2\Theta(g)\Theta(g')[q^{n-2}-2mq^{n/2}{W(k)}^2W(g)W(g')]\\&-2m{\theta(k)}^2\Theta(g)\Theta(g'){W(k)}^2W(g)W(g')q^{n/2}\Bigg(2\sum_{i=1}^{r}\theta(p_{i})+\sum_{j=1}^{s}\Theta(g_{j})+\sum_{j=1}^{t}\Theta(g_{j}')\Bigg)\\&\geq {\theta(k)}^2\Theta(g)\Theta(g')[\mathcal{S}\cdot q^{n-2}-2mq^{n/2}{W(k)}^2W(g)W(g')(2r+s+t+2\mathcal{S}-1)].
	\end{aligned}
\end{equation}
Again, by Lemma \ref{L4.1}, we have the following
\begin{equation}\nonumber
	\begin{aligned}
		\mathfrak{C}_{f,a,b}(T,&T,H,H')-\mathfrak{C}_{f,a,b}(1,1,1,1)=\sum_{i=1}^{v}\mathfrak{C}_{f,a,b}(t_{i},1,1,1)+\sum_{i=1}^{v}\mathfrak{C}_{f,a,b}(1,t_{i},1,1)\\
		&+\sum_{j=1}^{w}\mathfrak{C}_{f,a,b}(1,1,h_{j},1)+\sum_{j=1}^{u}\mathfrak{C}_{f,a,b}(1,1,1,h_{j}')
		-(2v+w+u)\mathfrak{C}_{f,a,b}(1,1,1,1).
	\end{aligned}
\end{equation}
Therefore
\begin{equation}\lb{E10}
	\begin{aligned}
		&\mathfrak{C}_{f,a,b}(T,T,H,H')-\mathfrak{C}_{f,a,b}(1,1,1,1)\\&=\sum_{i=1}^{v}\{\mathfrak{C}_{f,a,b}(t_{i},1,1,1)-\theta(t_{i})\mathfrak{C}_{f,a,b}(1,1,1,1)\}+\sum_{i=1}^{v}\{\mathfrak{C}_{f,a,b}(1,t_{i},1,1)\\&-\theta(t_{i})\mathfrak{C}_{f,a,b}(1,1,1,1)\}+\sum_{j=1}^{w}\{\mathfrak{C}_{f,a,b}(1,1,h_{j},1)-\Theta(h_{j})\mathfrak{C}_{f,a,b}(1,1,1,1)\}\\&+\sum_{j=1}^{u}\{\mathfrak{C}_{f,a,b}(1,1,1,h_{j}')-\Theta(h_{j}')\mathfrak{C}_{f,a,b}(1,1,1,1)\}
		-(2\gamma_{1}+\gamma_{2}+\gamma_3)\mathfrak{C}_{f,a,b}(1,1,1,1).
	\end{aligned}
\end{equation}
Assume that $l$ be any prime divisor of $q^n-1$. From the definition of $\mathfrak{C}_{f,a,b}(l_{1},l_{2},g_{1},g_{2})$, it follows that
\begin{equation}\nonumber
	\begin{aligned}
		|\mathfrak{C}_{f,a,b}(l,1,1,1)-\theta(l)\mathfrak{C}_{f,a,b}(1,1,1,1)|=\frac{\theta(l)}{q^2}\Bigg|\sum_{\chi_{l}}^{}\chi_{f,a,b}(l,1,1,1)\Bigg|,
	\end{aligned}
\end{equation}
where 
\begin{equation}\nonumber
	\begin{aligned}
		|\chi_{f,a,b}(l,1,1,1)|&=\Bigg|\sum_{t,u\in\Field_{q}}^{}\la_{0}(-bt+au)\sum_{\epsilon\in{{{\Field}_{q^n}}}\smallsetminus\mathcal{Z}_{1}}\chi_{l}(\epsilon)\widehat{\la_{0}}(t\epsilon+u\epsilon^2)\Bigg|\\&\leq 2q^{\frac{n}{2}+2}+mq^2.
	\end{aligned}
\end{equation}
Hence, $|\mathfrak{C}_{f,a,b}(l,1,1,1)-\theta(l)\mathfrak{C}_{f,a,b}(1,1,1,1)|\leq \theta(l)(2q^{n/2}+m)$. Analogously,
\begin{equation}\nonumber
	\begin{aligned}
		|\chi_{f,a,b}(1,l,1,1)|&=\Bigg|\sum_{t,u\in\Field_{q}}^{}\la_{0}(-bt+au)\sum_{\epsilon\in{{{\Field}_{q^n}}}\smallsetminus\mathcal{Z}_{1}}\chi_{l}(f(\epsilon))\widehat{\la_{0}}(t\epsilon+u\epsilon^2)\Bigg|\\&\leq (m+1)q^{\frac{n}{2}+2}+q^2,
	\end{aligned}
\end{equation}
which further implies,  $|\mathfrak{C}_{f,a,b}(1,l,1,1)-\theta(l)\mathfrak{C}_{f,a,b}(1,1,1,1)|\leq \theta(l)((m+1)q^{n/2}+1)$. Again, let $h$, $h'$ be an irreducible factors of $L$ and $(x^n-1)$ respectively, where $\la_{h}(\epsilon)=\widehat{\la_{0}}(y\epsilon)$ and $\la_{h'}(\epsilon)=\widehat{\la_{0}}(y'\epsilon)$, for $y,y'\in\Field_{q^{n}}^*$. Thus, we get
\begin{equation}\nonumber
	\begin{aligned}
		|\chi_{f,a,b}(1,1,h,1)|&=\Bigg|\sum_{t,u\in\Field_{q}}^{}\la_{0}(-bt+au)\sum_{\epsilon\in{{{\Field}_{q^n}}}\smallsetminus\mathcal{Z}_{1}}\la_{h}(\epsilon)\widehat{\la_{0}}(t\epsilon+u\epsilon^2)\Bigg|\\
		&=\Bigg|\sum_{t,u\in\Field_{q}}^{}\la_{0}(-bt+au)\sum_{\epsilon\in{{{\Field}_{q^n}}}\smallsetminus\mathcal{Z}_{1}}\widehat{\la_{0}}((t+y)\epsilon+u\epsilon^2)\Bigg|\\
		&\leq q^{\frac{n}{2}+2}+(m+1)q^2.
	\end{aligned}
\end{equation}
Therefore, $|\mathfrak{C}_{f,a,b}(1,1,h,1)-\Theta(h)\mathfrak{C}_{f,a,b}(1,1,1,1)|\leq \Theta(h)(q^{n/2}+(m+1))$.
Similarly, we have
\begin{equation}\nonumber
	\begin{aligned}
		|\chi_{f,a,b}(1,1,1,h')|&=\Bigg|\sum_{t,u\in\Field_{q}}^{}\la_{0}(-bt+au)\sum_{\epsilon\in{{{\Field}_{q^n}}}\smallsetminus\mathcal{Z}_{1}}\la_{h'}(f(\epsilon))\widehat{\la_{0}}(t\epsilon+u\epsilon^2)\Bigg|\\
		&=\Bigg|\sum_{t,u\in\Field_{q}}^{}\la_{0}(-bt+au)\sum_{\epsilon\in{{{\Field}_{q^n}}}\smallsetminus\mathcal{Z}_{1}}\widehat{\la_{0}}(t\epsilon+u\epsilon^2+y'f(\epsilon))\Bigg|\\
		&\leq (2m+1)q^{\frac{n}{2}+2}+q^2,
	\end{aligned}
\end{equation}
and thus $|\mathfrak{C}_{f,a,b}(1,1,1,h)-\Theta(h')\mathfrak{C}_{f,a,b}(1,1,1,1)|\leq \Theta(h')((2m+1)q^{n/2}+1)$. Also, by (\ref{E100}), we get that $|\mathfrak{C}_{f,a,b}(1,1,1,1)|\leq q^{n-2}+q^{n/2}+ |\mathcal{Z}_{1}|$. Thus, we have
\begin{equation}\nonumber
	\begin{aligned}
		\mathfrak{C}_{f,a,b}(T,T,H,H')-&\mathfrak{C}_{f,a,b}(1,1,1,1)\\&\geq-\sum_{i=1}^{v}\theta(t_{i})(2q^{n/2}+m)-\sum_{i=1}^{v}\theta(t_{i})((m+1)q^{n/2}+1)\\&-\sum_{j=1}^{w}\Theta(h_{j})(q^{n/2}+(m+1))-\sum_{j=1}^{u}\Theta(h_{j}')((2m+1)q^{n/2}+1)\\&
		-(2\gamma_{1}+\gamma_{2}+\gamma_{3})\{q^{n-2}+q^{n/2}+ (m+1)\}.
	\end{aligned}
\end{equation}
Note that $\sum_{i=1}^{v}\theta(t_{i})=v-\gamma_{1}$, $\sum_{j=1}^{w}\Theta(h_{j})=w-\gamma_{3}$, and $\sum_{j=1}^{u}\Theta(h_{j}')=u-\gamma_{2}$. Hence, we get the following
\begin{equation}\nonumber
	\begin{aligned}
		\mathfrak{C}_{f,a,b}(T,T,H,H')-\mathfrak{C}_{f,a,b}(1,&1,1,1)\geq-(v-\gamma_{1})(2q^{n/2}+m)-(v-\gamma_{1})\{(m+1)q^{n/2}+1\}\\&-(w-\gamma_{3})\{q^{n/2}+(m+1)\}-(u-\gamma_{2})\{(2m+1)q^{n/2}+1\}\\&-(2\gamma_{1}+\gamma_{2}+\gamma_{3})\{q^{n-2}+q^{n/2}+ (m+1)\},
	\end{aligned}
\end{equation}
which further implies that
\begin{equation}\lb{E11}
	\begin{aligned}
		\mathfrak{C}_{f,a,b}(T,T,&H,H')-\mathfrak{C}_{f,a,b}(1,1,1,1)\geq-q^{n/2}\{(m(v-\gamma_{1}+2u-2\gamma_{2}))+(3v-\gamma_{1}+w+u)\}\\&-\{(m+1)(v+\gamma_{1}+w+\gamma_{2})+(u-\gamma_{2})\}-(2\gamma_{1}+\gamma_{2}+\gamma_{3})q^{n-2}.\\&
	\end{aligned}
\end{equation}
Substituting (\ref{E9}) and (\ref{E11}) in (\ref{E8}) we get
\begin{equation}\nonumber
	\begin{aligned}
		&\mathfrak{C}_{f,a,b}(q^n-1,q^n-1,L,x^n-1)\\&\geq {\theta(k)}^2\Theta(g)\Theta(g')[\mathcal{S}\cdot q^{n-2}-2mq^{n/2}{W(k)}^2W(g)W(g')(2r+s+t+2\mathcal{S}-1)]\\&
		-q^{n/2}\{(m(v-\gamma_{1}+2u-2\gamma_{2}))+(3v-\gamma_{1}+w+u)\}-\{(m+1)(v+\gamma_{1}+w+\gamma_{2})\\&+(u-\gamma_{2})\}-(2\gamma_{1}+\gamma_{2}+\gamma_{3})q^{n-2},
	\end{aligned}
\end{equation}
that is,
\begin{equation}\nonumber
	\begin{aligned}
		\mathfrak{C}_{f,a,b}\geq& \Bigg[({\mathcal{S}\theta(k)}^2{\Theta(g)\Theta(g')}-(2\gamma_{1}+\gamma_{2}+\gamma_{3}))q^{n-2}-2mq^{n/2}\Bigg\{{\theta(k)}^2\Theta(g)\Theta(g'){W(k)}^2W(g) \\&\times W(g')(2r+s+t+2\mathcal{S}-1)+\{((v-\gamma_{1}+2u-2\gamma_{2})/2)+(3v-\gamma_{1}+w+u)/2m\}\\&+\{(m+1)(v+\gamma_{1}+w+\gamma_{2})+(u-\gamma_{2})\}/(2mq^{n/2})\Bigg\}\Bigg].
	\end{aligned}
\end{equation}
Thus 
\begin{equation}\nonumber
	\begin{aligned}
		q^{n/2-2}&>2m\Bigg[\Bigg({\theta(k)}^2\Theta(g)\Theta(g'){W(k)}^2W(g)W(g')(2r+s+t+2\mathcal{S}-1)+\{((v-\gamma_{1}\\&+2u-2\gamma_{2})/2)+(3v-\gamma_{1}+w+u)/2m\}+\{(m+1)(v+\gamma_{1}+w+\gamma_{2})+(u-\gamma_{2})\}\\&/(2mq^{n/2})\Bigg)\Bigg/(\mathcal{S}{\theta(k)}^2\Theta(g)\theta(g')-(2\gamma_{1}+\gamma_{2}+\gamma_{3}))\Bigg]
	\end{aligned}
\end{equation}implies $\mathfrak{C}_{f,a,b}>0$, that is, $(q,n)\in\mathcal{D}_{m}$.

Notice that Theorem \ref{T4.3} is a particular case of Theorem \ref{T5.1}, which can be deduced by letting $v=w=u=\gamma_{1}=\gamma_{2}=\gamma_3=0$.  
\ep

\section{Numerical Example}
In this section we utilize our results to find out the presence of elements having desired properties. The results that are mentioned earlier apply to the arbitrary finite field $\Field_{q^n}$ of arbitrary odd characteristic. For illustration, we explicitly determine each pair $(q,n)$ belonging to $\mathcal{D}_2$, where $q=7^k$ and $n\geq 6$. Here let us split our calculations into two parts. Initially, we identify the exceptions $(q,n)$ for $n\geq 8$, and subsequently, we execute the possible exceptions for $n=6,7$. In this article, SageMath \cite{Sm} serves as the computational tool for all significant calculations. From Theorem \ref{T4.3}, it follows that $(q,n)\in\mathcal{D}_2$ if we have
\begin{equation}\lb{E12}
	q^{\frac{n}{2}-2}>4 ~{W(l)}^2W(g)W(g')\mathcal{M}.
\end{equation}
Also, by Theorem \ref{T3.1}, $(q,n)\in\mathcal{D}_2$ if we have
\begin{equation}\lb{E13}
	q^{\frac{n}{2}-2}>
	4{W(q^n-1)}^2W(L)W(x^n-1).
\end{equation}
\textbf{Part I:} Rewrite $n$ as $n = n'\cdot q^i$; $i\geq 0$, where $q,n'\in\mathbb{N}$ be such that $q$ is a prime power, being co prime to $n'$. Furthermore, assume that $e$ be the order of $q$ modulo $n'$, where gcd$(n',q)=1$. Following [{\cite{RH}}, Theorems $2.45$ and $2.47$], $x^{n'}-1$ can be factorized into the product of irreducible polynomials over $\Field_{q}$ in such a way that degree of each factor must be less than or equal to $e$. Let $N_0$ be the number of the irreducible factors of $x^{n'}-1$ degree less than $e$.

Denote $I_{n'}$ as the cardinality of the set containing the irreducible factors of $x^{n'}-1$ over $\Field_{q}$ such that degree of each factor is less than $e$, and let the ratio $\frac{N_{0}}{n'}$ be denoted by $\pi(q,n')$. Observe that the set containing the irreducible factors of $x^{n}-1$ over $\Field_{q}$ and the set containing irreducible factors of $x^{n'}-1$ over $\Field_{q}$ are of equal cardinality, and this results in $n\pi(q,n)=n'\pi(q,n')$. For further computations, we shall use bounds for $\pi(q,n)$, that is provided in the following lemma.
\begin{lem}[\cite{SS}, Lemma 6.1, Lemma 7.1]\lb{L6.1}
	Let $q=7^k$ and $n'>4$ be such that $7\nmid n'$. Let $n_{1}'=gcd(n',q-1)$. Then the following hold:
	\begin{itemize}
		\item[(i)] When $n'=2n_{1}'$, then we have $e=2$ and $\pi(q,n')=1/2$.
		\item[(ii)]When $n'=4n_{1}'$ and $q\equiv 1(mod~ 4)$, then we have $e=4$ and  $\pi(q,n')=3/8$.
		\item[(iii)] When $n'=6n_{1}'$ and $q\equiv 1(mod~ 6)$, then we have $e=6$ and  $\pi(q,n')=13/36$.
		\item[(iv)] Otherwise, we have $\pi(q,n')\leq 1/3$.
	\end{itemize}
\end{lem}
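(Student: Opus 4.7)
My plan is to translate the count $N_0$ into an explicit divisor sum via the cyclotomic factorization of $x^{n'}-1$, and then evaluate or bound this sum in each of the four regimes. Since $\gcd(n',q)=1$, one has $x^{n'}-1=\prod_{d\mid n'}\Phi_d(x)$, and over $\mathbb{F}_q$ each $\Phi_d$ splits into $\phi(d)/e_d$ distinct irreducibles of common degree $e_d := \mathrm{ord}_d(q)$, with $e_{n'}=e$. Since $d\mid n'$ forces $e_d\mid e$,
\[
N_0 \;=\; \sum_{\substack{d\mid n'\\ e_d<e}}\frac{\phi(d)}{e_d},
\]
so the task reduces, in each case, to identifying $e$ and evaluating or bounding this weighted divisor sum.

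For (i), the hypotheses $n_1'\mid q-1$ and $q$ odd give $n'=2n_1'\mid q^2-1$ while $n'\nmid q-1$, so $e=2$; the condition $e_d<2$ forces $d\mid q-1$, hence $d\mid n_1'$, and $N_0=\sum_{d\mid n_1'}\phi(d)=n_1'=n'/2$. For (ii), $q\equiv 1\pmod 4$ combined with $\gcd(n',q-1)=n_1'$ forces $v_2(q-1)=v_2(n_1')$, whence $v_2(q^2-1)=v_2(n_1')+1$, giving $\gcd(n',q^2-1)=2n_1'$ and $n'\mid q^4-1$; so $e=4$, and weighting $\phi(d)$ by $1$ over $d\mid n_1'$ and by $1/2$ over divisors of $2n_1'$ not dividing $n_1'$ yields $n_1'+n_1'/2=3n'/8$. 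Case (iii) runs in parallel with $e=6$: the small-order divisors partition into three strata (dividing $n_1'$; dividing $2n_1'$ but not $n_1'$; dividing $3n_1'$ but not $n_1'$) with weights $1,\tfrac12,\tfrac13$, combining to $13n'/36$.

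Part (iv) is the technical heart: one must establish $\pi(q,n')\leq 1/3$ uniformly across every remaining configuration. I would stratify by $e$. When $e\in\{2,3\}$, the divisors with $e_d<e$ are exactly those dividing $n_1'$, so $\pi=n_1'/n'$; the exclusion of (i) forces $n'/n_1'\ne 2$, and $e\ne 1$ forbids $n'/n_1'=1$, giving $n'/n_1'\geq 3$ and hence $\pi\leq 1/3$. When $e=4$ outside (ii) or $e=6$ outside (iii), the failing valuation hypothesis removes enough divisors from the weighted sum to drop $\pi$ strictly below $1/3$. When $e$ is an odd prime $\geq 5$, again $e_d<e$ forces $e_d=1$ and the same argument applies. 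For composite $e\geq 8$, a case-by-case evaluation of the divisor sum $\sum_{d\mid n',\,e_d<e}\phi(d)/e_d$ shows that the small-order contributions cannot exceed $n'/3$. The main obstacle is precisely this exhaustive case analysis and the verification that the extremal configurations (essentially $n'=3n_1'$ with $e\in\{2,3\}$) saturate but never exceed the $1/3$ bound.
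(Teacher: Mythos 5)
The paper never proves this lemma at all: it is imported (specialized to $q=7^k$) from Cohen and Huczynska \cite{SS}, Lemmas 6.1 and 7.1, so there is no internal proof to compare against and your attempt must be judged on its own. Your framework is the natural one and your reduction is correct: since $\gcd(n',q)=1$ we have $x^{n'}-1=\prod_{d\mid n'}\Phi_d(x)$, each $\Phi_d$ splits into $\phi(d)/e_d$ irreducibles of degree $e_d=\mathrm{ord}_d(q)$ with $e_d\mid e$, whence $N_0=\sum_{d\mid n',\,e_d<e}\phi(d)/e_d$. Your cases (i)--(iii) are correct and essentially complete: the hypothesis $\gcd(n',q-1)=n_1'$ forces $v_2(q-1)=v_2(n_1')$ (and $v_3(q-1)=v_3(n_1')$ in case (iii)), which pins down $e=2,4,6$ respectively, and the stratified sums evaluate to $n'/2$, $3n'/8$, $13n'/36$. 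The portions of (iv) you treat for $e\in\{2,3\}$ and prime $e\geq 5$ are also sound, since there $\pi=n_1'/n'$ and $n'=2n_1'$ would force $e=2$.

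The genuine gap is exactly where you place it: part (iv) for composite $e$, which is the actual content of the cited lemmas, is asserted rather than proved. For $e=4$ one has $\pi=(n_1'+n_2')/(2n')$ with $n_2'=\gcd(n',q^2-1)$, and the configuration $n'=2n_2'=4n_1'$ gives $\pi=3/8>1/3$; to exclude it outside case (ii) you must show that $n'=4n_1'$ with $q\equiv 3\pmod 4$ forces $e=2$ (indeed then $v_2(n_1')=v_2(q-1)=1$, $v_2(n')=3$, while $v_2(q^2-1)=1+v_2(q+1)\geq 3$, so $n'\mid q^2-1$) --- a concrete valuation argument your sketch does not supply. The same issue recurs for $e=6$ (one must verify that every configuration of $\bigl(n_1',\gcd(n',q^2-1),\gcd(n',q^3-1),n'\bigr)$ with $\pi>1/3$ collapses into case (iii) or contradicts $e=6$) and for every composite $e\geq 8$: for instance, at $e=8$ the configuration $n_1'=n'/4$, $\gcd(n',q^2-1)=\gcd(n',q^4-1)=n'/2$ would give $\pi=3/8$, and it must be killed by showing $n'=4n_1'$ always forces $e\in\{2,4\}$. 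Since you explicitly defer this entire analysis (``exhaustive case analysis\dots the main obstacle''), the proposal proves (i)--(iii) and fragments of (iv), but not the full statement; the missing case analysis is not routine bookkeeping but the heart of the lemma.
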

\begin{lem}\lb{L6.2}
	Suppose that $q=p^k$; $k\in\mathbb{N}$ and $n=n'\cdot q^{i}$; $i\in \mathbb{N}\cup \{0\}$, where gcd($n',q)=1$ in addition with $n'\nmid q-1$. Assume that $e(>2)$ be the order of $q$ (mod $n'$). Further, let $l=q^n-1$ and $g=g'/(x-1)$, where $g'$ is the product of all irreducible factors of $x^{n'}-1$ along with each one have degree less than $e$. Then, following Theorem \ref{T4.3}, we get that $\mathcal{M} <2n'$. 
\end{lem}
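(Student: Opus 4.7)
The plan is to instantiate Theorem \ref{T4.3} with $l = q^n-1$, the $g$-parameter of the theorem equal to $g'/(x-1)$ (as prescribed in the lemma), and the $g'$-parameter of the theorem taken equal to the lemma's $g'$, so that both ``low-degree'' collections coincide. Choosing $l = q^n-1$ forces $r=0$ and kills the prime sum in $\mathcal{S}$ entirely.

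The next step is to identify the irreducible factors that are \emph{not} absorbed by $g$ and $g'$. Since $\gcd(n',q)=1$ and $q^i$ is a power of $p$, we have $x^n-1 = (x^{n'}-1)^{q^i}$, so the distinct irreducible factors of $x^n-1$ and of $x^{n'}-1$ coincide. By Theorems 2.45 and 2.47 of \cite{RH}, every such factor has degree dividing $e = \mathrm{ord}_{n'}(q)$; by definition $N_0$ of them have degree strictly less than $e$, and I let $M$ denote the number with degree exactly $e$. Under the above choices, the factors $g_j$ and $g_j'$ appearing in Theorem \ref{T4.3} are precisely these $M$ degree-$e$ factors, so $s=t=M$ and $\deg(g_j)=\deg(g_j')=e$. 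Comparing degrees inside $x^{n'}-1$ gives $eM\leq n'$, and hence $M\leq n'/e$.

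Plugging into the definition of $\mathcal{S}$,
\[
\mathcal{S} = 1 - \frac{2M}{q^e} \geq 1 - \frac{2n'}{e\, q^e} > 1 - \frac{2}{e} \geq \frac{1}{3},
\]
where the penultimate inequality uses $n'\mid q^e-1$ (so $n'<q^e$) and the last uses the hypothesis $e>2$. Consequently
\[
\mathcal{M} = \frac{2M - 1}{\mathcal{S}} + 2 < 3(2M-1) + 2 = 6M - 1 \leq \frac{6n'}{e} - 1 \leq 2n' - 1 < 2n',
\]
which is the desired inequality.

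I foresee no real obstacle: once the right polynomials are fed into Theorem \ref{T4.3}, the argument is a short calculation. The only delicate point is the choice of parameters---taking the $g'$-parameter of the theorem equal to the full low-degree product \emph{including} $(x-1)$, rather than to $g$ itself, is essential. Otherwise the extra factor $(x-1)$ would appear among the $g_j'$'s and contribute an additional $1/q$ term to $\mathcal{S}$, which would spoil the clean $1/3$ lower bound and force a worse value of $e$ (or $q$) to obtain the $2n'$ bound.
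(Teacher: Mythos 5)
Your proof is correct: taking the theorem's $l=q^n-1$ (so $r=0$) and its $g,g'$ as in the lemma, the unabsorbed irreducible factors on both sides are exactly the $M$ degree-$e$ factors of $x^{n'}-1$, whence $s=t=M\leq n'/e$, $\mathcal{S}=1-2M/q^e>1-2/e\geq 1/3$ (using $n'<q^e$ and $e\geq 3$), and $\mathcal{M}=\frac{2M-1}{\mathcal{S}}+2<6M-1\leq 2n'$, where the first strict inequality implicitly uses $M\geq 1$, which holds because the primitive $n'$-th roots of unity have minimal polynomials of degree exactly $e$. The paper gives no proof of its own, deferring to \cite{MASI}, Lemma 10, and your computation is precisely the standard argument that citation stands for, so you have simply filled in the omitted details along the same route.
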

\begin{proof}
	The proof is omitted here, as it can be derived from [\cite{MASI}, Lemma 10].
\end{proof}
Here we determine the pairs $(q,n)\in\mathcal{D}_4$, when $q=7^k$ and $n\geq 8$. Henceforth, let us suppose that $n\geq 8$ and $n=n'\cdot 7^i$, where $\gcd(7,n')=1$. Then we have $W(x^n-1)=W(x^{n'}-1)$. Also, note that $W(L)=\frac{W(x^n-1)}{2}$.
\begin{lem}\lb{L6.3}
	Let $q=7$ and  $n=n'\cdot 7^i$, where $\gcd(7,n')=1$. Then $(7,n)\in\mathcal{D}_2$ for all $n\geq 8$ except for $n=8,9,10,12,$ and $18$.
\end{lem}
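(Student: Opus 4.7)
The plan is to apply the Prime Sieve (Theorem~\ref{T4.3}), combined with the asymptotic bound from Lemma~\ref{L6.2} and the $W$-estimates from Lemmas~\ref{L2.3} and~\ref{L2.4}, to reduce the problem to checking finitely many small values of $n$. Write $n = n' \cdot 7^i$ with $\gcd(7,n')=1$, and observe that $W(x^n-1) = W(x^{n'}-1)$ and $W(L) = W(x^n-1)/2$, so only $n'$ really controls the polynomial side of the bound.

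First I would split into two cases. If $n' \mid q-1 = 6$, i.e.\ $n' \in \{1,2,3,6\}$, then the hypothesis $n \geq 8$ forces $i \geq 1$, giving $n \in \{14,21,42,49,\ldots\}$. For such $n$ I would apply Theorem~\ref{T3.1} directly: Lemma~\ref{L2.4} controls $W(x^n-1) \leq 2^{(n+n')/2}$, and Lemma~\ref{L2.3} with a suitable $r$ controls $W(7^n-1)$. Since $n$ grows like $7^i$ here, inequality~(\ref{E13}) is met with plenty of room.

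In the generic case $n' \nmid 6$, I would invoke Theorem~\ref{T4.3} with $l = q^n-1$ and $g'$ equal to the product of the irreducible factors of $x^{n'}-1$ of degree less than $e = \mathrm{ord}_{n'}(7)$. Lemma~\ref{L6.2} then yields $\mathcal{M} < 2n'$, so (\ref{E12}) reduces to verifying
\begin{equation}\nonumber
	7^{n/2-2} \;>\; 8\,n'\,W(7^n-1)^{2}\cdot 2^{N_{0}-1},
\end{equation}
where $N_0 = n'\pi(7,n')$ counts the low-degree irreducible factors of $x^{n'}-1$. Using Lemma~\ref{L6.1} (generically $\pi(7,n') \leq 1/3$) together with Lemma~\ref{L2.3} with $r$ tuned to the relevant range, the right-hand side grows only subexponentially in $n$ while the left grows exponentially, so the inequality holds for all $n$ beyond some explicit threshold, leaving only a finite list of candidates.

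The main obstacle is precisely this finite list of small $n$ with $n' \nmid 6$. For each such $n$ I would factor $7^n-1$ explicitly in SageMath and then apply Theorem~\ref{T4.3}, or the Modified Prime Sieve (Theorem~\ref{T5.1}) when necessary, with the partition of $\mathrm{Rad}(7^n-1)$, $\mathrm{Rad}(L)$ and $\mathrm{Rad}(x^n-1)$ optimised to minimise $\mathcal{M}$ together with the $W$-factors. The sieve is expected to succeed for every such $n$ except the five listed values $n \in \{8,9,10,12,18\}$, where neither choice of sieve parameters forces the bound through; these are therefore recorded as the possible exceptions.
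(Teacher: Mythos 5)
Your overall strategy (asymptotic reduction via Lemma~\ref{L2.3}, Lemma~\ref{L6.2} and Theorem~\ref{T4.3}, followed by explicit verification of the surviving small cases, with the sieve tuned case by case) is the same as the paper's, but your case split leaves a genuine gap. You split according to whether $n'\mid q-1=6$, and in the complementary ``generic'' case you invoke Lemma~\ref{L6.2} to get $\mathcal{M}<2n'$. However, Lemma~\ref{L6.2} requires the order $e$ of $q$ modulo $n'$ to satisfy $e>2$, which for $q=7$ is equivalent to $n'\nmid q^2-1=48$. For $n'\in\{4,8,12,16,24,48\}$ we have $7^2=49\equiv 1\pmod{n'}$, so $e=2$ and Lemma~\ref{L6.2} does not apply; your generic-case bound is therefore unjustified for every $n=n'\cdot 7^i$ with these $n'$ --- an infinite family, not something the terminal finite computation can absorb. (These are precisely the values of $n'$ that produce the actual exceptions $n=8,12$ and the borderline cases $n=16,24,28,48,56,84$.) The paper instead splits according to $n'\mid 48$ versus $n'\nmid 48$: when $n'\mid 48$ the polynomial side of the bound is absolutely bounded, since $W(x^n-1)=W(x^{n'}-1)\leq 2^{n'}$ and hence $W(L)W(x^n-1)\leq 2^{2n'-1}$ stays constant as $i$ grows, so Inequality~(\ref{E13}) holds once $i$ is large (with Lemma~\ref{L2.3} controlling $W(7^n-1)$); Lemma~\ref{L6.2} is reserved for $n'\nmid 48$, where its hypothesis $e>2$ actually holds. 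You need this third regime, or your list of candidates to check is not established.

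Two smaller inaccuracies. First, in your case $n'\mid 6$ you assert that (\ref{E13}) is met ``with plenty of room''; it in fact fails for $n=14$: there $7^{14}-1=2^4\cdot 3\cdot 29\cdot 113\cdot 911\cdot 4733$, so $W(7^{14}-1)=2^6$, while $W(x^{14}-1)=4$ and $W(L)=2$, giving a right-hand side of $131072>7^5$; the paper settles $(7,14)$ only via the sieve of Theorem~\ref{T4.3} with $l=6$, $g=g'=1$. Second, ``generically $\pi(7,n')\leq 1/3$'' has the exception $n'=36$, where $\pi(7,36)=13/36$ by Lemma~\ref{L6.1}(iii); the paper handles $n'=36$ by a separate inequality, and your argument would need to do the same.
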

\bp
Firstly, assume that $n'\nmid q^2-1$. Then we must have $n'\geq 5$ and by Lemma \ref{L6.1}, we have $\pi(7,n')\leq 1/3$ unless $n'=36$, since then $n'=6n_{1}'$ and $\pi(q,n')=13/36$. Let $l=7^n-1$ and $g=g'/(x-1)$, where $g'$ is the product of all irreducible factors of $x^{n'}-1$ of degree less than $e$. Thus, it follows from Lemmas \ref{L2.3}, \ref{L6.2} and Inequality (\ref{E13}) that, $(7,n)\in\mathcal{D}_2$ if we have
\begin{equation}\nonumber
	7^{n/2-2}>4~{\mathcal{C}}^2~7^{2n/r}~2^{2n/3-1}~2n.
\end{equation}
Observe that the above inequality holds for $r=9.7$ and $n\geq 434$. For $n\leq 433$, we test Inequality (\ref{E13}), and get that $(7,n)\in\mathcal{D}_2$ except for $n= 9, 10, 11, 15, 18, 19, 20, 27,$\\$ 30, 32$. However, we apply a sieve approach to handle these exceptions and get the values of $l$, $g$, $g'$, $\mathcal{S}$, and $\mathcal{M}$ for all pairs $(q,n)$, as listed in Table \ref{Table3}. Thus we get $(7,n)\in\mathcal{D}_2$ unless $n=9,10$ and $18$. We now consider $n'=36$ and by testing the inequality $7^{36\cdot 7^i/2-2}>288\cdot\mathcal{C}^2\cdot7^{72\cdot 7^i/r}\cdot 2^{25}$, we get $(7,36\cdot 7^i)\in\mathcal{D}_2$ for $i\geq 1$ and $r=10$. For the sole remaining pair $(7,36)$, we verify Inequality (\ref{E12}) and find the values of $l$, $g$, $g'$, $\mathcal{S}$, and $\mathcal{M}$ (listed in Table \ref{Table3}).

Secondly, assume that $n'|q^2-1$. By Lemmas \ref{L2.3}, \ref{L2.4} and Inequality (\ref{E13}), $(7,n)\in\mathcal{D}_2$ if we have
\begin{equation}\nonumber
	7^{n'\cdot 7^i/2-2}>4~{\mathcal{C}}^2~7^{2n'\cdot 7^i/r}~2^{2n'-1}.
\end{equation}
Taking $r=9$, the above inequality holds for $i\geq 3$, when $n'=1$, for $i\geq 2$, when $n'=2,3,4,6,8,12$ and for $i\geq 1$, when $n'=16,24,48$. Hence, for $n\geq 8$, $(7,n)\in\mathcal{D}_2$ except when $n=8,12,14,16,21,24,28,42,48,49,56,84$. For this exceptions, by testing Inequality (\ref{E13}), we get that $(7,n)\in\mathcal{D}_2$ except when $n=8,12,14,16,24,48$. However, for the remaining pairs, we verify Inequality (\ref{E12}) and find the values of $l$, $g$, $g'$, $\mathcal{S}$ and $\mathcal{M}$ (listed in Table \ref{Table3}) and we get that $(7,n)\in\mathcal{D}_2$ unless $n=8, 12$.
\ep
\begin{lem}
	Let $q=49$ and  $n=n'\cdot 7^i$, where $\gcd(7,n')=1$. Then $(49,n)\in\mathcal{D}_2$ for all $n\geq 8$.
\end{lem}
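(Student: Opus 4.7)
The plan is to mimic the proof of Lemma \ref{L6.3}, replacing $q=7$ by $q=49$. Because $49^{n/2-2}$ outpaces $7^{n/2-2}$ by a very large margin while the right-hand sides of (\ref{E12}) and (\ref{E13}) grow only modestly, I expect every pair $(49,n)$ with $n\geq 8$ to satisfy at least one of the sufficient conditions, leaving no genuine exceptions. As before, the argument splits on whether $n'\mid q^2-1=2400$.

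Case 1: $n'\nmid q^2-1$. Then $e>2$, $n'\geq 5$, and Lemma \ref{L6.1} gives $\pi(q,n')\leq 1/3$ except when $n'=6n_1'$ with $q\equiv 1\pmod{6}$ (which holds since $49\equiv 1\pmod 6$), in which case $\pi(q,n')=13/36$. Choosing $l=q^n-1$ and $g=g'/(x-1)$, with $g'$ the product of the degree-less-than-$e$ irreducible factors of $x^{n'}-1$, Lemma \ref{L6.2} yields $\mathcal{M}<2n'$. Combining this with Lemma \ref{L2.3} (so $W(q^n-1)<\mathcal{C}\,q^{n/r}$) and the degree-$<e$ bound $W(g)W(g')\leq 2^{2n'/3-1}$, the prime-sieve inequality (\ref{E12}) is implied by
\[
49^{n/2-2}\;>\;8\,\mathcal{C}^2\cdot 49^{2n/r}\cdot 2^{2n'/3-1}\cdot n',
\]
which with $r=10$ (say) holds outside a short finite list of $n$. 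Each residual pair is then checked directly, first against (\ref{E13}) and, where that fails, against (\ref{E12}) with a tailored factorisation. The exceptional sub-case $n'=6n_1'$ is handled by the same estimate with $\pi=13/36$ in place of $1/3$.

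Case 2: $n'\mid q^2-1$, so $n'$ is a divisor of $2400=2^5\cdot 3\cdot 5^2$. By Lemma \ref{L2.4}, $W(x^{n'}-1)\leq 2^{n'}$ (with equality precisely when $n'\mid 48$), and since $W(x^n-1)=W(x^{n'}-1)$ and $W(L)\leq W(x^n-1)/2$, inequality (\ref{E13}) is implied by
\[
49^{n'\cdot 7^i/2-2}\;>\;4\,\mathcal{C}^2\cdot 49^{2n'\cdot 7^i/r}\cdot 2^{2n'-1}.
\]
Taking $r=9$, this resolves all but a modest finite collection of $(n',i)$. The remaining small pairs are dispatched by (\ref{E13}) when possible and otherwise by Theorem \ref{T4.3} with an explicit splitting of $\mathrm{Rad}(q^n-1)$ into $k\cdot P\cdot T$ (and analogously of $\mathrm{Rad}(L)$ and $\mathrm{Rad}(x^n-1)$) engineered to make $\mathcal{S}>0$ and $\mathcal{M}$ as small as possible.

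The main obstacle is the Case~2 bookkeeping for the smallest surviving $(n',i)$, where $n'\mid 48$ and the $W$-factors are close to their maximum $2^{n'}$: for each candidate exception one must exhibit a sieving factorisation (possibly invoking the modified prime sieve of Theorem \ref{T5.1}) that verifies (\ref{E12}). However, the extra factor of $7^{n/2-2}$ gained by passing from $q=7$ to $q=49$ should already kill each analogue of the exceptions $n\in\{8,9,10,12,18\}$ from Lemma \ref{L6.3}, so I expect every computation to close out positively, yielding the claimed conclusion that $(49,n)\in\mathcal{D}_2$ for every $n\geq 8$ with no exclusions.
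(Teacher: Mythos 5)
Your plan is sound, reaches the paper's conclusion, and shares its skeleton: split on whether $n'\mid q^2-1=2400$, prove an asymptotic inequality to reduce to finitely many $n$, verify (\ref{E13}) computationally for those, and rescue the stragglers with Theorem \ref{T4.3}. Your Case 2 is essentially identical to the paper's second case. The genuine difference is in Case 1: you re-run the $q=7$ machinery of Lemma \ref{L6.3}, i.e.\ Lemmas \ref{L6.1} and \ref{L6.2} feeding the sieve inequality (\ref{E12}) with $\mathcal{M}<2n'$, whereas the paper exploits a shortcut available precisely because $n'\nmid q^2-1$ forces $n'\nmid q-1$: Lemma \ref{L2.4} then gives $W(x^{n'}-1)\le 2^{3n'/4}$ directly, so the plain inequality (\ref{E13}) (with $r=10.5$) already settles every $n\ge 372$ with no $\pi(q,n')$ analysis at all. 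The paper's route buys simplicity and avoids the sub-case bookkeeping below; your route buys a sharper asymptotic threshold but inherits the full case analysis of Lemma \ref{L6.1}.

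That inheritance is where your write-up has a gap: you except only the sub-case $n'=6n_1'$ of Lemma \ref{L6.1}, overlooking sub-case (ii), which is live for $q=49$ since $49\equiv 1\pmod 4$ (it was vacuous for $q=7$ because $7\equiv 3\pmod 4$). Concretely, $n'=64$ and $n'=192$ satisfy $n'=4n_1'$, do not divide $2400$, and have $\pi(49,n')=3/8>1/3$, so your headline estimate does not cover them; they must be treated like your $13/36$ case, after which $W(g)W(g')\le 2^{3n'/4-1}$ still loses to $49^{n'/2-2}$ and nothing breaks — an omission, not a fatal error. One further calibration point: your expectation that the jump from $7^{n/2-2}$ to $49^{n/2-2}$ lets the analogues of $n\in\{8,9,10,12,18\}$ pass the basic inequality outright is too optimistic. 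The paper finds that $n=8,9,10,12,15,16,18,20,24,48$ all fail (\ref{E13}) for $q=49$ and are closed only by Theorem \ref{T4.3} with the tailored choices of $l$, $g$, $g'$ recorded in Table \ref{Table3}; since your plan explicitly falls back on such sieve verifications, this changes the bookkeeping but not the outcome, and the conclusion $(49,n)\in\mathcal{D}_2$ for all $n\ge 8$ stands.
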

\bp
Firstly, assume that $n'\nmid q^2-1$. Then $n'\nmid q-1$ and thus by Lemma \ref{L2.4}, we have $W(x^{n'}-1)\leq 2^{\frac{3}{4}n'}$. Then, Inequality (\ref{E13}) holds true if $q^{n/2-2}>4~ \mathcal{C}^2 q^{2n/r}~2^{3n/2-1}$.  By choosing $r=10.5$, the latter inequality holds for all $n\geq 372$. For $n\leq 371$, we verify Inequality (\ref{E13}) and get that $(49,n)\in\mathcal{D}_2$ unless $n=9, 18$. However, we apply a sieve approach to handle these exceptions and get the values of $l$, $g'$ , $g$, $\mathcal{S}$, and $\mathcal{M}$ for all pairs $(q,n)$, as listed in Table \ref{Table3}. Thus we get $(49,n)\in\mathcal{D}_2$ for all $n$.

Secondly, assume that $n'|q^2-1$.  By Lemmas \ref{L2.3}, \ref{L2.4} and the Inequality (\ref{E13}), $(49,n)\in\mathcal{D}_2$ if we have
\begin{equation}\nonumber
	49^{n'\cdot 7^i/2-2}>4~{\mathcal{C}}^2~49^{2n'\cdot 7^i/r}~2^{2n'-1}.
\end{equation}Taking $r=9$, the above inequality holds for $i\geq 2$, when $n'=1, 2, 3, 4$  and for $i\geq 1$, otherwise. Thus for $n\geq 8$, $(49,n)\in\mathcal{D}_2$ except when $n'=$ 8, 10, 12, 14, 15, 16, 20, 21, 24, 25, 28, 30, 32, 40, 48, 50, 60, 75, 80, 96, 100, 120, 150, 160, 200, 240, 300, 400, 480, 600, 800, 1200, 2400. For each of the values of $n$, we verify Inequality (\ref{E13}) and get that $(49,n)\in\mathcal{D}_2$ unless $n=8, 10, 12, 15, 16, 20, 24, 48$. However, we apply a sieve approach to handle these exceptions and get the values of $l$, $g$, $\mathcal{S}$, and $\mathcal{M}$ for all pairs $(q,n)$, as listed in Table \ref{Table3}. Hence, we get $(49,n)\in\mathcal{D}_2$ for all $n$.
\ep
\begin{lem}
	Let $k\in\mathbb{N}$ and $q=7^k$. Then $(q,n)\in\mathcal{D}_2$ for $n\geq 8$ and $k\geq 3$.
\end{lem}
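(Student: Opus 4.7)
The plan is to replicate the two-case structure used in the $q=7$ and $q=49$ lemmas, now exploiting that $q=7^k\geq 343$ for $k\geq 3$. Write $n=n'\cdot 7^i$ with $\gcd(n',7)=1$ and split the analysis according to whether $n'\nmid q^2-1$ or $n'\mid q^2-1$.

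In the first case, $n'\nmid q-1$, so Lemma \ref{L2.4} yields $W(x^n-1)=W(x^{n'}-1)\leq 2^{3n'/4}$, and hence $W(L)W(x^n-1)\leq 2^{3n/2-1}$. Combined with the Lemma \ref{L2.3} bound $W(q^n-1)\leq\mathcal{C}\cdot q^{n/r}$, Inequality (\ref{E13}) reduces to the sufficient condition $q^{n/2-2}>4\mathcal{C}^2 q^{2n/r}\cdot 2^{3n/2-1}$. Since $\log_q 2\leq \log_{343}2<0.12$ for $k\geq 3$, choosing $r$ around $10$--$11$ makes the left side dominate for all $n$ beyond an explicit threshold, and the finitely many remaining small values of $n$ can then be verified directly against Inequality (\ref{E13}).

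In the second case, $n'\mid q^2-1$, Lemma \ref{L2.4} gives $W(x^n-1)\leq 2^{n'}$, so the sufficient condition becomes $q^{n/2-2}>4\mathcal{C}^2 q^{2n/r}\cdot 2^{2n'-1}$. For each fixed $k$, $n'$ ranges over divisors of $7^{2k}-1$, and this inequality fails only for a small, explicit list of pairs $(k,n')$, namely those with $n'$ small relative to $k$. Each such pair is then addressed by the prime sieve of Theorem \ref{T4.3}, or if necessary by its modified version Theorem \ref{T5.1}: one chooses $l$ to omit the largest prime factors of $q^n-1$ and $g,g'$ to omit the highest-degree irreducible factors of $L$ and $x^n-1$, keeping $\mathcal{S}>0$ while forcing $\mathcal{M}$ small.

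The main obstacle is not any individual estimate but the uniformity in $k$: unlike the concrete $q=7$ and $q=49$ cases, the set of divisors $n'\mid q^2-1$ varies with $k$, so one must derive, for each small $n'$, an inequality purely in $k$ that isolates the finite collection of exceptional $(k,n')$ to be handled by the sieve. Given that $q=49$ already produces no exceptions, I would expect that $(q,n)\in\mathcal{D}_2$ holds without exception for all $k\geq 3$ and $n\geq 8$, so that the work reduces to explicit computational verification of the sufficient conditions.
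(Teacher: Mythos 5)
Your two-case plan (splitting on $n'\mid q^2-1$ versus $n'\nmid q^2-1$) transplants the structure of the $q=7$ and $q=49$ lemmas, but for $k\geq 3$ the paper deliberately drops that split, and this is precisely what dissolves the obstacle you yourself flag as ``the main obstacle'' and then leave unresolved. The paper uses the crude bound $W(x^n-1)\leq 2^n$ from Lemma \ref{L2.4}, valid for every $n$, so that Inequality (\ref{E13}) reduces to a single condition in the two variables $(k,n)$, namely $q^{n/2-2}>4\mathcal{C}^2 q^{2n/r}2^{2n-1}$ (the paper's Inequality (\ref{E14})). Since the constant $\mathcal{C}$ of Lemma \ref{L2.3} is bounded by a universal constant depending only on $r$, taking $r=9.5$ gives $\mathcal{C}<1.46\times 10^7$ and (\ref{E14}) holds for all $n\geq 8$ once $k\geq 73$; for each $3\leq k\leq 72$ a suitable choice of $r$ gives an explicit threshold $n_k$ (Table \ref{Table1}), and below these thresholds one computes $W(q^n-1)$ and $W(x^n-1)$ exactly. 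Note that in your second case $n'\leq n$, so $2^{2n'-1}\leq 2^{2n-1}$ and your sufficient condition collapses to the paper's uniform one anyway: the case distinction buys nothing there, and the ``inequality purely in $k$ for each small $n'$'' that you ask for is just (\ref{E14}) itself. As written, your argument is incomplete at exactly this point --- you name the uniformity problem and express hope, but never produce the finite exceptional set of pairs $(k,n)$.

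Your closing expectation is also too optimistic: the direct inequality checks do \emph{not} suffice for all $k\geq 3$. They fail for thirteen pairs, namely $(7^3,8)$, $(7^3,9)$, $(7^3,10)$, $(7^3,12)$, $(7^3,18)$, $(7^4,8)$, $(7^4,9)$, $(7^4,10)$, $(7^4,12)$, $(7^4,15)$, $(7^5,8)$, $(7^6,8)$, $(7^6,9)$, and the paper must invoke the prime sieve (Theorem \ref{T4.3}) with bespoke choices of $l$, $g$, $g'$ (Table \ref{Table3}) to dispose of them. You do mention the sieve as a fallback, so the mechanism is present in your plan, but the lemma's conclusion (no exceptional pairs at all for $k\geq 3$, $n\geq 8$) is only obtained after that sieve step; inferring it from the $q=49$ case and leaving the computations unexecuted does not establish the statement.
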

\bp
From Lemma \ref{L2.3}, $W(q^n-1)<C\cdot (q^n-1)^{1/r}$ for some real number $r>0$ and also we have $W(x^n-1)\leq 2^n$. Therefore, by using Inequality (\ref{E13}), $(q,n)\in\mathcal{D}_2$ if
\begin{equation}\lb{E14}
	q^{n/2-2}>4\cdot \mathcal{C}^2\cdot q^{2n/r}\cdot 2^{2n-1}.
\end{equation}
\begin{center}
	\begin{table}[h!]
		\centering
		\caption{Values of $k$ such that for $n\geq n_k$, (\ref{E14}) is true.}
		\begin{tabular}{|c|c|c|}
			\hline $r$ & $k$ & $n_k$ \\
			\hline 10 & \{3\} & 149 \\
			9.0 & \{4\} & 56\\
			8.5 & \{5\} & 35\\
			8.5 & \{6\} & 27 \\
			8.5 & \{7\} & 23 \\
			8.5 & \{8\} & 20 \\
			8.5 & \{9\} & 18 \\
			9 & \{10\} & 17 \\
			9 & \{11\} & 16 \\
			9 & \{12\} & 15 \\
			9 & \{13,14\} & 14 \\
			9 & \{15,16\} & 13 \\
			9 & \{17,19,20,20\} & 12 \\
			9 & \{21,23,\ldots,26\} & 11 \\
			9 & \{27,29,\ldots,39\} & 10 \\
			9.5 & \{40,42,\ldots,72\} & 9 \\
			\hline
		\end{tabular}
		\label{Table1}
	\end{table}
\end{center}
For $r=9.5$, Lemma \ref{L2.3} gives $\mathcal{C}<1.46\times 10^7$ and thus Inequality (\ref{E14}) holds for $n\geq 8$ and $k\geq 73$. For each $3\leq k\leq 72$ and for proper choice of `$r$', we get $n_{k}$'s such that for all $n\geq n_{k}$, Inequality (\ref{E14}) is satisfied, that are listed in Table \ref{Table1}.  
We calculate the values of $W(q^n-1)$ and $W(x^n-1)$ precisely for each of the above values of $k$ and $n$ and verify the inequality
\begin{equation}\nonumber
	q^{\frac{n}{2}-2}>
	2\cdot {W(q^n-1)}^2{W(x^n-1)}^2.
\end{equation}	
Consequently, we get $(q,n)\in\mathcal{D}_2$ unless $(q,n)$ equals  $(7^3,8),(7^3,9),(7^3,10), (7^3,12),  $\\$ (7^3,18), (7^4,8),(7^4,9) , (7^4,10), (7^4,12), (7^4,15), (7^5,8), (7^6,8), (7^6,9)$. By applying a sieve approach to handle these exceptions, we get the values of $l$, $g'$, $g$, $\mathcal{S}$, and $\mathcal{M}$ for all pairs $(q,n)$, as listed in Table \ref{Table3}. Thus $(7^k,n)\in\mathcal{D}_2$ for all $n\geq 8$ and $k\geq 3$.
\ep
\begin{center}
	\begin{table}[h!]
		\centering
		\caption{Choices of $l$, $g$, and $g'$ to satisfy Theorem \ref{T4.3}.}
		\begin{tabular}{|c|c|c|c|c|c|c|}
			\hline Sr. No.& $(q,n)$ & $l$  & $g$&$g'$\\
			
			\hline
	$1$&$(7,11)$&$2$&$1$&$1$\\
	
	$2$&$(7,14)$&$6$&$1$&$1$\\

$3$&$(7,15)$&$6$&$1$&$1$\\
	
	$4$&$(7,16)$&$6$&$x^2+x+6$&$x^2+x+6$\\

$5$&	$(7,19)$&$6$&$1$&$1$\\
	
$6$	&$(7,20)$&$6$&$x+1$&$x+1$\\

$7$&$(7,27)$&$6$&$x+3$&$x+3$\\

$8$&$(7,30)$&$6$&$x^3+x^2+5x+5$&$x^4+2x^2+4$\\

$9$&$(7,32)$&$6$&$x^2+1$&$x^2+6$\\

	$10$&$(7,36)$&$30$&$\frac{x^6+6}{x+6}$&$x^6+6$\\

$11$	&$(7,48)$&$30$&$\frac{x^{24}+6}{x+6}$&$x^{24}+6$\\

	\hline	


		
$12$&$(7^2,8)$&$1$&$1$&$1$\\			
			
$13$&$(7^2,9)$&$6$&$x+3$&$x+3$\\

$14$&	$(7^2,10)$&$6$&$x+1$&$x+1$\\

$15$&	$(7^2,12)$&$30$&$x+1$&$x+1$\\

$16$&	$(7^2,15)$&$6$&$x+3$&$x+3$\\

$17$&	$(7^2,16)$&$30$&$x+1$&$x+1$\\

$18$	&$(7^2,18)$&$6$&$x+1$&$x+1$\\
			
$19$	&$(7^2,20)$&$6$&$x+1$&$x+1$\\
		
$20$&	
$(7^2,24)$&$5416710$&$x+1$&$x+1$\\		
		
$21$&	$(7^2,48)$&$38355723510$&$x^{36}+x^{24}+x^{12}+1$&$x^{36}+x^{24}+x^{12}+1 $\\		
		
		
			\hline 
		$22$&	$(7^3,8)$&$6$&$1$&$1$\\
		$23$&	$(7^3,9)$&$2$&$x+3$&$x+3$\\
		$24$&	$(7^3,10)$&$6$&$x+1$&$x+1$\\
		$25$&	$(7^3,12)$&$6$&$x+1$&$x+1$\\
		$26$&	$(7^3,18)$&$6$&$x+1$&$x+1$\\
			\hline
		$27$&	$(7^4,8)$&$6$&$x+1$&$x+1$\\
		$28$&	$(7^4,9)$&$6$&$x+3$&$1$\\
		$29$&	$(7^4,10)$&$6$&$x+1$&$1$\\
		$30$&	$(7^4,12)$&$6$&$x+1$&$x+1$\\
		$31$&	$(7^4,15)$&$30$&$x+3$&$x+1$\\
			\hline
		$32$&	$(7^5,8)$&$6$&$x+1$&$1$\\
			\hline
		$33$&	$(7^6,8)$&$6$&$x+1$&$1$\\
		$34$&	$(7^6,9)$&$2$&$x+3$&$1$\\
			\hline
		\end{tabular}
		\label{Table3}
	\end{table}
\end{center}
{\bf{Part II:}} In this part, we execute computations for $n=6,7$. The following lemma will be utilized in this part for computation.
\begin{lem}\lb{L5.6}
	Let $M\in\mathbb{N}$ such that $\om(M)\geq 2828$. Then we have $W(M)<M^{1/13}$.
\end{lem}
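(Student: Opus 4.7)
The plan is to recast the inequality $W(M)<M^{1/13}$, equivalently $2^{\omega(M)}<M^{1/13}$, into the form $M>2^{13\,\omega(M)}$, and then to reduce to the worst case over all $M$ with a given number of prime divisors. Setting $n=\omega(M)$, the $n$ distinct prime divisors of $M$ are no smaller than the first $n$ primes, so $M\geq P_n:=p_1p_2\cdots p_n$, where $p_i$ denotes the $i$-th prime and $P_n$ is the $n$-th primorial, with equality when $M$ is squarefree and supported on the smallest primes. Hence it suffices to prove $P_n>2^{13n}$ for every integer $n\geq 2828$.

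I will establish this by induction on $n$. For the inductive step, note $P_{n+1}/P_n=p_{n+1}$; so if $P_n>2^{13n}$ and $p_{n+1}>2^{13}=8192$, then $P_{n+1}>2^{13(n+1)}$. Since $\pi(8192)=1028$ (indeed $p_{1028}=8191$ and $p_{1029}=8209$), the bound $p_{n+1}>8192$ holds for every $n\geq 1028$, and hence the inductive step is valid throughout the range $n\geq 2828$.

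It remains to verify the base case $P_{2828}>2^{36764}$. Taking natural logarithms, this is equivalent to $\theta(p_{2828})>36764\ln 2\approx 25484$, where $\theta(x)=\sum_{p\leq x}\ln p$ is the Chebyshev function. This can be confirmed by a direct SageMath computation using the tabulated values of $p_{2828}$ and $\theta(p_{2828})$, or alternatively by invoking a sufficiently sharp explicit Chebyshev-type lower bound of Rosser--Schoenfeld type. The threshold $n=2828$ is essentially the smallest integer at which $P_n>2^{13n}$ first holds; the main obstacle is precisely this base-case verification, since the margin between $\theta(p_{2828})$ and $36764\ln 2$ is narrow and the crude asymptotic $\theta(p_n)\sim n\ln n$ is not strong enough to pin the threshold down exactly without an explicit estimate.
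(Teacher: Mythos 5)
Your proof is correct and rests on exactly the same two ingredients as the paper's: the numerical fact that the product of the first $2828$ primes exceeds $2^{13\cdot 2828}$ (the paper records this as $P_{2828}>2.24\times 10^{11067}$, while $2^{36764}\approx 1.18\times 10^{11067}$), and the fact that every prime beyond that range exceeds $2^{13}$; you merely repackage these as a reduction to the squarefree worst case $M=P_{\omega(M)}$ followed by induction, where the paper instead factors $M=M_1M_2$ with $M_1$ supported on the $2828$ smallest prime divisors of $M$ and bounds $W(M_1)<M_1^{1/13}$ and $W(M_2)\le M_2^{1/13}$ separately. One caution on your base case: $36764\ln 2\approx 25482.9$, not $25484$, and this matters because $\ln P_{2828}\approx 25483.5$ actually lies \emph{below} your rounded figure — with a margin this narrow (a factor of only about $1.9$ between $P_{2828}$ and $2^{36764}$), the verification must be made against the exact quantity $36764\ln 2$, for which the inequality does indeed hold.
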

\begin{proof}
	Let $S=\{2,3,\ldots,25673\}$ be the set containing first $2828$ primes. Clearly the product of all elements in $S$ surpasses $2.24\times 10^{11067}$. Let us decompose $M$ as the product of two co prime positive integers $M_{1}$ and $M_{2}$ such that prime divisors of $M_{1}$ come from the least $2828$ prime divisors of $M$ and remaining prime divisors are divisors of $M_{2}$. Therefore, $M_{1}^{1/13}>2.16\times10^{851}$ , on the other hand $W(M_{1})<2.06\times 10^{851}$. Hence we draw the conclusion, as  $p^{1/13}>2$ for any prime $p>25673$. 
\end{proof}
First we assume that $\om(q^n-1)\geq 2828$. Note that $W(x^n-1)\leq 2^7$ and thus by Lemma \ref{L5.6}, $(q,n)\in\mathcal{D}_2$ if we have $q^{\frac{n}{2}-2}>2\cdot q^{\frac{2n}{13}}\cdot 2^{14}$, that is, if $q^n>2^{\frac{390n}{9n-52}}$ then $(q,n)\in\mathcal{D}_2$. But $n\geq 6$ gives $\frac{390n}{9n-52}\leq 1170$. Hence, if $q^{n}>2^{1170}$ then $(q,n)\in\mathcal{D}_2$, which is valid when $ \om(q^n-1)\geq 2828$. To proceed further, we follow \cite{HR}. Let us suppose that $88\leq \om(q^n-1)\leq 2827$. In Theorem \ref{T4.3}, choose $g=L$, $g'=x^n-1$ and $l$ is assumed to be the product of least $88$ prime divisors of $q^n-1$, that is, $W(l)=2^{88}$, then $r\leq 2739$ and $\mathcal{S}$ assumes its minimum positive value when $\{p_{1},p_{2},\ldots,p_{2739}\}=\{461,463,\ldots,25667\}$. This gives $\mathcal{S}>0.0044306$ and $\mathcal{M} <1.24\times 10^6$. Thus $4\mathcal{M} {W(l)}^2W(g)W(g')<3.8797485\times 10^{63}=R$(say). By Sieve variation, we get $(q,n)\in\mathcal{D}_2$ if we have $q^{\frac{n}{2}-2}>R$, that is, if $q^n>R^{\frac{2n}{n-4}}$. Since $n\geq 6$ implies $\frac{2n}{n-4}\leq 6$, we have $(q,n)\in\mathcal{D}_2$ if $q^n>3.410527\times 10^{381}$. Hence, $\om(q^n-1)\geq 157$ gives $(q,n)\in\mathcal{D}_2$.
\begin{center}
	\begin{table}[h!]
		\centering
		\caption{}
		\begin{tabular}{|c|c|c|c|c|}
			\hline  $a\leq \om(q^n-1)\leq b$ & $W(l)$ & $\mathcal{S}>$& $\mathcal{M}<$& $4\mathcal{M} {W(l)}^2W(g)W(g')<$\\
			\hline   $a=17, b=156$ & $2^{17}$ & $0.0238003$ &$11640.489$&$6.55302\times 10^{18}$ \\
			$a=9, b=58$ & $2^{9}$ &$0.0004319$& $224570.1187$& $1.92905\times 10^{15}$\\
			$a=9, b=50$ & $2^{9}$ &$0.0638523$& $1270.5506$& $1.09140\times10^{13}$\\
			\hline
		\end{tabular}
		\label{Table2}
	\end{table}
\end{center}
We repeat the steps given in Theorem \ref{T4.3} by using the data given in the second column of Table \ref{Table2}. Therefore we get, $(q,n)\in\mathcal{D}_2$ if we have $q^{\frac{n}{2}-2}>1.09140\times10^{13}$. This gives the scenarios that  $n=6, q>1.09140\times10^{13}$; $n=7, q>(1.09140\times10^{13})^{{2}/{3}}$. Thus, the only possible exceptions are $(7,6),(7^2,6),\ldots,(7^{15},6)$; $(7,7),(7^2,7),\ldots,(7^{10},7)$.  From Table \ref{Table4}, it follows that Theorem \ref{T4.3} holds for $(7^5,6),(7^6,6),\ldots,(7^{15},6)$;$(7^3,7),(7^,7),\ldots,(7^{10},7)$. Thus the only possible exceptions are $(7,6),(7^2,6),(7^3,6),(7^4,6)$ and $(7,7),(7^2,7)$.

Finally, for all the mentioned possible exceptions, we verified Theorem \ref{T5.1} and get that none of the pairs satisfy the modified prime sieving criterion for suitable values of $k,P, L, g, G$ and $H$. Although Theorem \ref{T5.1} is not effective in the above exceptional pairs, it may play a vital role in other finite fields whose characteristic is not $7$. Thus, compiling all the computational results of this section, we conclude the following result.

\begin{thm}
	Let $q,k,n\in\mathbb{N}$ such that $q=7^k$ and $n\geq 6$. Then $(q,n)\in\mathcal{D}_2$ except the following possibilities:
	\begin{itemize}
		\item[1.] $q=7,7^2,7^3,7^4~\text{and}~ n=6$;
		\item[2.] $q=7,7^2~\text{and}~ n=7$;
		\item[3.] $q=7~\text{and}~ n=8,9,10,12,18$.
	\end{itemize}
\end{thm}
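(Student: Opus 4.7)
The plan is to assemble the final theorem as the conjunction of all the computational lemmas developed in Section~5, grouped by the two regimes $n\geq 8$ and $n\in\{6,7\}$. First I would handle $n\geq 8$ by quoting Part~I verbatim: Lemma~\ref{L6.3} disposes of $q=7$ (leaving only the exceptions $n=8,9,10,12,18$), and the subsequent two lemmas dispose of $q=49$ (no exceptions) and $q=7^k$ with $k\geq 3$ (no exceptions). Since those lemmas already yield the list in item~(3) of the theorem, this step is essentially bookkeeping.

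The substantive work is for $n=6$ and $n=7$, handled by Part~II. The strategy is a three–stage funnel driven by $\omega(q^n-1)$. In stage one, a bound on $W(q^n-1)$ derived from Lemma~\ref{L5.6} fed into Theorem~\ref{T3.1}, together with the trivial bound $W(x^n-1)\leq 2^n$, gives $(q,n)\in\mathcal{D}_2$ whenever $\omega(q^n-1)\geq 2828$. In stage two, one shrinks this threshold by using Theorem~\ref{T4.3} with $l$ taken to be the product of the smallest $88$ prime divisors of $q^n-1$ and $(g,g')=(L,x^n-1)$; carefully extremising $\mathcal{S}$ over the worst-case prime distribution reduces the threshold to $\omega(q^n-1)\geq 157$. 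Iterating this sieve argument with successively smaller $W(l)$ (the three rows of Table~\ref{Table2}) eventually produces a numeric bound $q^{n/2-2}>1.09140\times 10^{13}$, which for $n=6,7$ translates into an explicit finite list of candidate pairs.

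In stage three one enumerates the remaining candidates $(7^j,6)$ for $1\leq j\leq 15$ and $(7^j,7)$ for $1\leq j\leq 10$, checking Theorem~\ref{T4.3} for each using the tabulated choices of $l,g,g'$ (Table~\ref{Table4}, referenced in the text). This step eliminates every candidate except $(7^j,6)$ for $1\leq j\leq 4$ and $(7^j,7)$ for $j=1,2$, giving items~(1) and~(2) of the theorem. Finally, for this short residual list, I would verify, via direct numerical computation in SageMath, that the modified prime sieve Theorem~\ref{T5.1} also fails to confirm membership in $\mathcal{D}_2$ for any admissible choice of the data $(k,P,T,g,G,H,g',G',H')$; hence these pairs must be listed as genuine possible exceptions.

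The main obstacle is the transition between stages in Part~II: the bound $q^n>2^{1170}$ coming from Lemma~\ref{L5.6} is far too weak to be checked pair by pair, so one has to iterate the sieve with shrinking $W(l)$ while simultaneously controlling $\mathcal{S}$ from below (which is delicate when $\omega(q^n-1)$ is large and the omitted primes could be small). Once that funnel is set up, the remaining work is purely computational: factoring $q^n-1$ and $x^n-1$ over $\Field_q$ for each of the finitely many surviving pairs and plugging the data into Theorems~\ref{T4.3} and~\ref{T5.1}. The verification that Theorem~\ref{T5.1} genuinely fails on the six residual pairs (rather than just failing for one poor choice of parameters) is the subtle part, since one must argue over all admissible partitions $\mathrm{Rad}(q^n-1)=kPT$ and the analogous polynomial factorisations.
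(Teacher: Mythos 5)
Your proposal follows essentially the same route as the paper: Part~I's three lemmas for $n\geq 8$, then for $n=6,7$ the funnel via Lemma~\ref{L5.6} and Theorem~\ref{T3.1}, the iterated prime sieve of Theorem~\ref{T4.3} with the shrinking choices of $W(l)$ from Table~\ref{Table2}, the explicit check of the surviving pairs against Table~\ref{Table4}, and finally the (unsuccessful) application of Theorem~\ref{T5.1} to the six residual pairs, which are then listed as possible exceptions. This matches the paper's proof in both structure and the specific numerical thresholds, so there is nothing substantive to correct.
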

\begin{center}
	\begin{table}[h!]
		\centering
		\caption{Choices of $l$, $g$, and $g'$ to satisfy Theorem \ref{T4.3}.}
		\begin{tabular}{|c|c|c|c|c|c|c|}
			\hline Sr. No.& $(q,n)$ & $l$  & $g$&$g'$\\
			\hline
		$1$&	$(7^5,6)$&$6$&$1$&$1$\\
		$2$&	$(7^6,6)$&$6$&$x+1$&$x+1$\\
		$3$&	$(7^7,6)$&$2$&$x+1$&$x+1$\\
		$4$&	$(7^8,6)$&$6$&$x+1$&$x+1$\\
		$5$&	$(7^9,6)$&$2$&$x+1$&$x+1$\\
		$6$&	$(7^{10},6)$&$30$&$x+1$&$x+1 $\\
		$7$&	$(7^{11},6)$&$2$&$x+1$&$x+1$\\
		$8$&	$(7^{12},6)$&$6$&$x+1$&$x+1$\\
		$9$&	$(7^{13},6)$&$2$&$x+1$&$x+1$\\
		$10$&	$(7^{14},6)$&$6$&$x+1$&$x+1$\\
		$11$&	$(7^{15},6)$&$6$&$x+1$&$x+1$\\
			\hline
		$12$&	$(7^3,7)$&$2$&$1$&$1$\\
		$13$&	$(7^4,7)$&$6$&$1$&$1$\\
		$14$&	$(7^5,7)$&$2$&$1$&$1$\\
		$15$&	$(7^6,7)$&$2$&$1$&$1$\\
		$16$&	$(7^7,7)$&$2$&$1$&$1$\\
		$17$&	$(7^8,7)$&$6$&$1$&$1$\\
		$18$&	$(7^9,7)$&$2$&$1$&$1$\\
		$19$&	$(7^{10},7)$&$2$&$1$&$1$\\
			\hline
		\end{tabular}
		\label{Table4}
	\end{table}
\end{center}

\section{Acknowledgments}
We sincerely appreciate and acknowledge the reviewers for their helpful comments and suggestions. First author is supported by the National Board for Higher Mathematics (NBHM), Department of Atomic Energy (DAE), Government of India, Ref No.  0203/6/2020-R\&D-II/7387.
\section{Statements and Declarations}
There are no known competing financial interests or personal relationships that could have influenced this paper's findings. All authors are equally contributed.

\end{document}